\newtheorem{theorem}{Theorem}[section]
\newtheorem{lemma}[theorem]{Lemma}
\newtheorem{remark}[theorem]{Remark}
\newenvironment{proof}[1][Proof]{\noindent\textbf{#1.} }{\ \rule{0.5em}{0.5em}}
\numberwithin{equation}{section}
\begin{document}

\title{The Leray-Lions existence theorem\\
under general growth conditions}
\author{Giovanni Cupini$^{1}$, Paolo Marcellini$^{2}$\thanks{%
Corresponding author.}, Elvira Mascolo$^{2}$ \\
$\quad $ \\
{\normalsize $^{1}$Dipartimento di Matematica, Universit\`a di Bologna }\\
{\normalsize Piazza di Porta S. Donato 5, 40126 - Bologna, Italy }\\
{\normalsize giovanni.cupini@unibo.it }\\
{\normalsize $^{2}$Dipartimento di Matematica e Informatica ``U. Dini'',
Universit\`a di Firenze}\\
{\normalsize Viale Morgagni 67/A, 50134 - Firenze, Italy}\\
{\normalsize paolo.marcellini@unifi.it, elvira.mascolo@unifi.it}}
\date{}
\maketitle

\begin{abstract}
We prove an existence result of weak solutions $u\in W_{0}^{1,p}\left(
\Omega \right) \cap W_{\mathrm{loc}}^{1,q}\left( \Omega \right) $, to a
Dirichlet problem for a second order elliptic equation in divergence form,
under general and $p,q-$\textit{growth conditions} of the differential
operator. This is a first attempt to extend to general growth the well known
Leray-Lions existence theorem, which holds under the so-called natural
growth conditions with $q=p$. We found a way to treat the general context
with explicit dependence on $\left( x,u\right) $, other than on the gradient
variable $\xi =Du$; these aspects require particular attention due to the $%
p,q-$context, with some differences and new difficulties compared to the
standard case $p=q$.
\end{abstract}

\emph{Key words}: Elliptic equations, nonlinear equations, existence of
solutions, $p,q-$growth conditions, Leray-Lions Theorem, regularity of weak
solutions.

\emph{Mathematics Subject Classification (2020)}: 35A01(primary); 35B65,
35D30, 35J60 (secondary).

\section{Introduction}

The celebrated existence theorem of weak solutions to a class of nonlinear
elliptic equations, published in 1965 by Jean Leray and Jacques-Louis Lions,
in the case of \textit{second order equations} is related to the Dirichlet
problem 
\begin{equation}
\left\{ 
\begin{array}{l}
\sum_{i=1}^{n}\frac{\partial }{\partial x_{i}}a^{i}\left( x,u\left( x\right)
,Du\left( x\right) \right) =b\left( x\right) ,\;\;\;\;\;x\in \Omega \,, \\ 
u=0\;\;\;\text{on }\partial \Omega \,,%
\end{array}%
\right.  \label{Dirichlet problem}
\end{equation}%
where $\Omega $ is an open and bounded set in $\mathbb{R}^{n}$ and $a\left(
x,u,\xi \right) =\left( a^{i}\left( x,u,\xi \right) \right) _{i=1,\ldots ,n}$
is a \textit{Carath\'{e}odory} vector field in $\Omega \times \mathbb{R}%
\times \mathbb{R}^{n}$; i.e. $a:\Omega \times \mathbb{R}\times \mathbb{R}%
^{n}\rightarrow \mathbb{R}^{n}$ is continuous with respect to $\left( u,\xi
\right) \in \mathbb{R}\times \mathbb{R}^{n}$ and measurable when $x$ varies
in $\Omega \subset \mathbb{R}^{n}$. The vector field $a$ satisfies the
growth condition 
\begin{equation}
\left\vert a\left( x,u,\xi \right) \right\vert \leq M\left( \left\vert \xi
\right\vert ^{p-1}+\left\vert u\right\vert ^{p-1}+b_{1}\left( x\right)
\right) ,  \label{Leray-Lions 1}
\end{equation}%
for a constant $M$, an exponent $p>1$, every $\left( u,\xi \right) \in 
\mathbb{R}\times \mathbb{R}^{n}$ and a function $b_{1}\in L^{p^{\prime
}}\left( \Omega \right) $, with $\frac{1}{p}+\frac{1}{p^{\prime }}=1$, as
well as the right hand side $b$ in (\ref{Dirichlet problem})$_{1}$: $b\in
L^{p^{\prime }}\left( \Omega \right) $. Moreover, for a.e. $x\in \Omega $
and $\left\vert u\right\vert $ bounded, it satisfies the coercivity 
\begin{equation}
\lim_{\left\vert \xi \right\vert \rightarrow +\infty }\frac{\left( a\left(
x,u,\xi \right) ,\xi \right) }{\left\vert \xi \right\vert +\left\vert \xi
\right\vert ^{p-1}}=+\infty \,  \label{Leray-Lions 2}
\end{equation}%
and the monotonicity condition 
\begin{equation}
\left( a\left( x,u,\xi \right) -a\left( x,u,\eta \right) ,\xi -\eta \right)
>0,\;\;\;\;\;\forall \;\xi ,\eta \in \mathbb{R}^{n}:\;\xi \neq \eta \,.
\label{Leray-Lions 3}
\end{equation}%
As usual $\left( a\left( x,u,\xi \right) ,\xi \right) $ in (\ref{Leray-Lions
2}) denotes the \textit{scalar product} in $\mathbb{R}^{n}$ of $a\left(
x,u,\xi \right) $ and $\xi $; and similarly in (\ref{Leray-Lions 3}). A 
\textit{weak solution} to the Dirichlet problem (\ref{Dirichlet problem}) is
a function $u:\Omega \rightarrow \mathbb{R}$ in the Sobolev space $%
W_{0}^{1,p}\left( \Omega \right) $ such that 
\begin{equation}
A\left( u,v\right) +\int_{\Omega }b\left( x\right) v\,dx=0\,,
\label{Leray-Lions 4}
\end{equation}%
where $A\left( u,v\right) $ is the form, linear in $v\in W_{0}^{1,p}\left(
\Omega \right) $, 
\begin{equation}
A\left( u,v\right) :=\int_{\Omega }\sum_{i=1}^{n}a^{i}\left( x,u\left(
x\right) ,Du\left( x\right) \right) v_{x_{i}}\,dx\;.  \label{Leray-Lions 5}
\end{equation}%
The existence result can be stated in the following way.

\begin{theorem}[{Leray-Lions \protect\cite[Th\'{e}or\`{e}me 2]{Leray-Lions
1965}}]
\label{Leray-Lions Theorem}Consider the Carath\'{e}odory vector field $%
a\left( x,u,\xi \right) =\left( a^{i}\left( x,u,\xi \right) \right)
_{i=1,\ldots ,n}$, $a:\Omega \times \mathbb{R}\times \mathbb{R}%
^{n}\rightarrow \mathbb{R}^{n}$ satisfying the growth conditions from above (%
\ref{Leray-Lions 1}) and from below (\ref{Leray-Lions 2}), and the
monotonicity condition (\ref{Leray-Lions 3}). If 
\begin{equation}
\lim_{\left\Vert u\right\Vert _{W_{0}^{1,p}\left( \Omega \right)
}\rightarrow +\infty }\frac{\left\vert A\left( u,u\right) \right\vert }{%
\left( \int_{\Omega }\left\vert Du\right\vert ^{p}\,dx\right) ^{1/p}}%
=+\infty \,  \label{Leray-Lions 6}
\end{equation}%
then the Dirichlet problem (\ref{Dirichlet problem}) has (at least) a weak
solution in $W_{0}^{1,p}\left( \Omega \right) $.
\end{theorem}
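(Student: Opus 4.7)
The plan is to follow the classical Galerkin-plus-monotonicity strategy originally devised by Leray and Lions. First I would fix a countable family $\{w_{k}\}_{k\in \mathbb{N}}\subset W_{0}^{1,p}(\Omega )$ whose linear span is dense in $W_{0}^{1,p}(\Omega )$ (for instance smooth functions with compact support, or eigenfunctions of a suitable elliptic operator), set $V_{m}:=\mathrm{span}\{w_{1},\dots ,w_{m}\}$, and look for approximate solutions $u_{m}\in V_{m}$ satisfying $A(u_{m},w_{k})+\int_{\Omega }b\,w_{k}\,dx=0$ for every $k=1,\dots ,m$. Writing $u_{m}=\sum_{j=1}^{m}c_{j}^{(m)}w_{j}$, this is a nonlinear system of $m$ equations in $m$ real unknowns. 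Its solvability follows from Brouwer's fixed point theorem applied to the continuous map $F_{m}:V_{m}\to V_{m}$ whose components are $(F_{m}u)_{k}=A(u,w_{k})+\int_{\Omega }b\,w_{k}\,dx$: the coercivity hypothesis (\ref{Leray-Lions 6}), together with H\"{o}lder's inequality on the $\int b\,u$ term, guarantees that $(F_{m}u,u)>0$ once $\|u\|_{W_{0}^{1,p}}$ is large enough, whence a zero exists in a sufficiently large ball.

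The same coercivity yields a uniform bound $\|u_{m}\|_{W_{0}^{1,p}(\Omega )}\leq C$, so up to a subsequence $u_{m}\rightharpoonup u$ weakly in $W_{0}^{1,p}(\Omega )$, $u_{m}\to u$ strongly in $L^{p}(\Omega )$ and a.e.\ in $\Omega $ (Rellich--Kondrachov). The growth bound (\ref{Leray-Lions 1}) then gives that the sequence $a(\cdot ,u_{m},Du_{m})$ is bounded in $L^{p^{\prime }}(\Omega ;\mathbb{R}^{n})$, hence converges weakly (along a further subsequence) to some $\chi \in L^{p^{\prime }}(\Omega ;\mathbb{R}^{n})$. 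Passing to the limit in the Galerkin identities, I obtain
\begin{equation*}
\int_{\Omega }\chi \cdot Dv\,dx+\int_{\Omega }b\,v\,dx=0\qquad \text{for every }v\in \bigcup _{m}V_{m},
\end{equation*}
and hence, by density, for every $v\in W_{0}^{1,p}(\Omega )$. What remains is the identification $\chi =a(x,u,Du)$ a.e.

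The identification is the heart of the matter and will be carried out by Minty's trick. Testing the Galerkin identity with $v=u_{m}$, passing to the limit, and comparing with the limiting identity tested by $u$, I obtain $\lim_{m}\int_{\Omega }a(x,u_{m},Du_{m})\cdot Du_{m}\,dx=\int_{\Omega }\chi \cdot Du\,dx$. Using the monotonicity (\ref{Leray-Lions 3}), for any $\varphi \in W_{0}^{1,p}(\Omega )$ I write
\begin{equation*}
\int_{\Omega }\bigl(a(x,u_{m},Du_{m})-a(x,u_{m},D\varphi )\bigr)\cdot (Du_{m}-D\varphi )\,dx\geq 0.
\end{equation*}
Expanding and letting $m\to \infty $, the key point is to justify $a(x,u_{m},D\varphi )\to a(x,u,D\varphi )$ strongly in $L^{p^{\prime }}(\Omega )$, which follows from the Carath\'{e}odory property, the a.e.\ convergence $u_{m}\to u$, the bound (\ref{Leray-Lions 1}) and Vitali's convergence theorem. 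In the limit one gets $\int_{\Omega }(\chi -a(x,u,D\varphi ))\cdot (Du-D\varphi )\,dx\geq 0$ for every $\varphi \in W_{0}^{1,p}(\Omega )$. Choosing $\varphi =u-\lambda \psi $ with $\lambda >0$ and $\psi \in W_{0}^{1,p}(\Omega )$ arbitrary, dividing by $\lambda $ and letting $\lambda \to 0^{+}$, the continuity of $\xi \mapsto a(x,u,\xi )$ yields $\int_{\Omega }(\chi -a(x,u,Du))\cdot D\psi \,dx\geq 0$; the opposite inequality follows from $\lambda <0$, and the arbitrariness of $\psi $ forces $\chi =a(x,u,Du)$.

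The main obstacle I anticipate is precisely this identification step: the dependence of $a$ on the non-compactly-embedded variable $\xi =Du$ prevents any direct use of Rellich, and one must rely entirely on the monotonicity together with the limit-energy equality $\lim \int a(x,u_{m},Du_{m})\cdot Du_{m}=\int \chi \cdot Du$; the $(x,u)$-dependence requires in addition the Vitali-type argument, which is possible only because the growth (\ref{Leray-Lions 1}) is exactly of natural order $p-1$ in both $u$ and $\xi $.
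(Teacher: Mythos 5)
The paper does not prove this theorem: it is stated as a quoted result from the 1965 Leray--Lions article, and its proof appears in that reference (and in J.-L. Lions's 1969 book). So there is no ``paper's own proof'' to compare against, and your proposal must be judged on its own merits as a reconstruction of the classical argument.

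Your plan is the standard and correct modern rendering of that argument: a Galerkin scheme on nested finite-dimensional subspaces, solvability of each discrete problem via the acute-angle corollary of Brouwer's fixed point theorem using the coercivity hypothesis, a uniform $W_{0}^{1,p}(\Omega)$ bound, extraction of a weak limit $u$ together with a weak $L^{p'}$ limit $\chi$ of the fluxes $a(\cdot ,u_{m},Du_{m})$, and finally identification of $\chi =a(x,u,Du)$ by Minty's trick. You correctly single out the two genuinely delicate points: the energy equality $\lim_{m}\int a(x,u_{m},Du_{m})\cdot Du_{m}=\int \chi \cdot Du$, obtained by testing the Galerkin identity with $u_{m}$ and using $u_{m}\to u$ strongly in $L^{p}$, and the strong $L^{p'}$ convergence $a(\cdot ,u_{m},D\varphi )\to a(\cdot ,u,D\varphi )$, obtained from the Carath\'{e}odory property, a.e.\ convergence of $u_{m}$, the natural $(p-1)$-growth (\ref{Leray-Lions 1}) and Vitali's theorem. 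This is precisely what makes the theorem nontrivial once $a$ depends on the non-compactly-embedded variable $\xi=Du$, and it is the route Leray and Lions themselves followed.

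One detail you should tighten. Hypothesis (\ref{Leray-Lions 6}) controls $\lvert A(u,u)\rvert $, not $A(u,u)$ itself, so it does not by itself guarantee that $(F_{m}c,c)=A(u_{c},u_{c})+\int_{\Omega }b\,u_{c}\,dx$ is \emph{positive} for large $\lVert u_{c}\rVert _{W_{0}^{1,p}}$; it only shows the absolute value is large. To fix the sign one also needs the pointwise coercivity (\ref{Leray-Lions 2}) (together with the monotonicity (\ref{Leray-Lions 3}) to prevent cancellations), or in practice the stronger structure hypothesis (\ref{Leray-Lions 7}) that the paper quotes from the Remarque of Leray--Lions. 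This is a known but real technical point in the Brouwer step, and as written your proposal glosses over it.
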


Note that assumption (\ref{Leray-Lions 6}) has the specific role to exclude,
for instance, eigenvalues problems, whose Dirichlet problem (\ref{Dirichlet
problem}) may lack the solution, unless the right hand side $b$ satisfies
some specific compatibility conditions. The same Leray-Lions \cite[Remarques
1$^{0}$]{Leray-Lions 1965} observed that a sufficient condition for the
validity of (\ref{Leray-Lions 6}) is 
\begin{equation}
\left( a\left( x,u,\xi \right) ,\xi \right) =\sum_{i=1}^{n}a^{i}\left(
x,u,\xi \right) \xi _{i}\,\geq m\left\vert \xi \right\vert ^{p}\,,
\label{Leray-Lions 7}
\end{equation}%
for constants $m>0$, $M\geq 0$ and for every $\xi =\left( \xi _{i}\right)
_{i=1,\ldots ,n}$ such that $\left\vert \xi \right\vert \geq M$.

We emphasize the relevance of Theorem \ref{Leray-Lions Theorem} when the
vector field $a\left( x,u,\xi \right) =\left( a^{i}\left( x,u,\xi \right)
\right) _{i=1,\ldots ,n}$ explicitly depend on $u$; otherwise the existence
result for the Dirichlet problem (\ref{Dirichlet problem}) simplifies,
because the theory of monotone operators can be more directly applied.

\bigskip

After the publication in 1965, several authors considered and extended in
some directions this existence result by Leray-Lions. First in 1966 the well
known article related to the equation $\sum_{i=1}^{n}\frac{\partial }{%
\partial x_{i}}a^{i}(Du)=b(u)$ by Hartman-Stampacchia \cite%
{Hartman-Stampacchia 1966}. %
The same J.L.Lions, in his book \textit{Quelques m\'{e}thodes de r\'{e}%
solution des probl\`{e}mes aux limites non lin\'{e}aires} \cite[Th\'{e}or%
\`{e}me 2.8]{Lions 1969}, revisited the Leray-Lions theorem, essentially
with similar assumptions. Relevant are the generalizations due to
Boccardo-Murat-Puel \cite[Theorem 1]{Boccardo-Murat-Puel 1988},\cite[Theorem
2]{Boccardo-Murat-Puel 1992} under the so-called \textit{\textquotedblleft
natural growth conditions"}. In particular Boccardo-Murat-Puel considered in 
\cite{Boccardo-Murat-Puel 1988} obstacle unilateral variational solutions,
with the possibility for the obstacle to be identically $-\infty $, i.e. the
case corresponding to the variational equation. While in \cite%
{Boccardo-Murat-Puel 1992} Boccardo-Murat-Puel obtained an $L^{\infty }-$%
regularity theorem with $b\in L^{s_{0}}\left( \Omega \right) $ and $s_{0}>%
\frac{n}{p}$, which allowed them to study a more general right hand side $%
b=b(x,u,Du)$; in fact they considered what nowadays is named a \textit{%
Leray-Lions type} differential operator $A\left( u\right) $, of the form 
\begin{equation}
A\left( u\right) =\sum_{i=1}^{n}\frac{\partial }{\partial x_{i}}a^{i}\left(
x,u\left( x\right) ,Du\left( x\right) \right) +a_{0}\left( x,u\left(
x\right) ,Du\left( x\right) \right) \,,  \label{Leray-Lions type 1}
\end{equation}%
where $a_{0}\left( x,u,\xi \right) $ is \textit{regularizing nonlinear
first-order term}; i.e., a Carath\'{e}odory function defined in $\Omega
\times \mathbb{R}\times \mathbb{R}^{n}$ which satisfies \textit{"natural
growth conditions"} similar to $a\left( x,u,\xi \right) $ in (\ref%
{Leray-Lions 1}) and the \textit{coercivity condition} with respect to the $%
u-$variable 
\begin{equation}
a_{0}\left( x,u,\xi \right) u\geq m_{0}\left\vert u\right\vert
^{p}\,,\;\;\;\;\forall \;\left( x,u,\xi \right) \in \Omega \times \mathbb{R}%
\times \mathbb{R}^{n},  \label{Leray-Lions type 2}
\end{equation}%
for a constant $m_{0}>0$. For instance $a_{0}\left( x,u,\xi \right) =f\left(
x,u,\xi \right) \left\vert u\right\vert ^{p-2}u$ is a typical example; here $%
f\left( x,u,\xi \right) $, is a bounded Carath\'{e}odory function far from
zero; i.e. $f\left( x,u,\xi \right) \geq m_{0}>0$. Note that, by (\ref%
{Leray-Lions type 2}), $a_{0}\left( x,u,\xi \right) $ cannot be identically
equal to zero; thus the \textit{Leray-Lions type} differential operator in (%
\ref{Leray-Lions type 1}) in general is different from the differential
operator considered by Leray-Lions in Theorem \ref{Leray-Lions Theorem}.
Under this \textit{Leray-Lions type} operator $A\left( u\right) $,
Boccardo-Murat-Puel obtained the existence of a weak solution to the related
Dirichlet problem with a general right hand side $b=b(x,u,\xi )$ 
\begin{equation*}
\left\{ 
\begin{array}{l}
\sum_{i=1}^{n}\frac{\partial }{\partial x_{i}}a^{i}\left( x,u\left( x\right)
,Du\left( x\right) \right) +a_{0}\left( x,u\left( x\right) ,Du\left(
x\right) \right) =b(x,u,Du),\;x\in \Omega \,, \\ 
u=0\;\;\;\text{on }\partial \Omega \,.%
\end{array}%
\right.
\end{equation*}%
See also the related and more recent articles by\ Ferone-Murat \cite%
{Ferone-Murat 2000}, Alvino-Boccardo-Ferone-Orsina-Trombetti \cite%
{Alvino-Boccardo-Ferone-Orsina-Trombetti 2003}, Abdellaoui-Dall'Aglio-Peral 
\cite{Abdellaoui-Dall'Aglio-Peral 2006}, Arcoya-Carmona-Leonori-Mart\'{\i}%
nezAparicio-Orsina-Petitta \cite%
{Arcoya-Carmona-Leonori-MartinezAparicio-Orsina-Petitta 2009}, Le \cite{Le
2009}, Sanch\'{o}n-Urbano \cite{Sanchon-Urbano 2009},
Alvino-Mercaldo-Volpicelli-Betta \cite{Alvino-Mercaldo-Volpicelli-Betta 2019}
and the book by Boccardo-Croce \cite[Chapter 4]{Boccardo-Croce 2010}.

A case which seems not yet obtained in the literature is the extension of
the Leray-Lions existence theorem to general growth conditions. What do we
mean with \textit{general growth conditions}? Here some simple examples of
differential operators to whom the Leray-Lions theorem in the original
version, with the so called \textit{natural growth conditions}, cannot be
applied:

\begin{equation}
\text{\textit{logaritmic} forms:}\;\;\;\;\;A\left( u,v\right) :=\int_{\Omega
}\sum_{i=1}^{n}\log (1+\left\vert Du\right\vert ^{2})\,\left\vert
Du\right\vert ^{p-2}u_{x_{i}}v_{x_{i}}\,dx\;;  \label{example 1}
\end{equation}%
\begin{equation}
\text{\textit{variable exponents} forms:}\;\;\;\;\;A\left( u,v\right)
:=\int_{\Omega }\sum_{i=1}^{n}\left\vert Du\right\vert ^{p\left( x\right)
-2}u_{x_{i}}v_{x_{i}}\,dx\,;  \label{example 2}
\end{equation}%
\begin{equation}
\text{\textit{anisotropic} forms:}\;\;\;\;\;A\left( u,v\right)
:=\int_{\Omega }\sum_{i=1}^{n}\left\vert u_{x_{i}}\right\vert
^{p_{i}-2}u_{x_{i}}v_{x_{i}}\,dx\,;  \label{example 3}
\end{equation}%
\begin{equation}
\text{\textit{double phase} forms:}\;\;\;\;\;A\left( u,v\right)
:=\int_{\Omega }\sum_{i=1}^{n}(\left\vert Du\right\vert ^{p-2}+a\left(
x\right) \left\vert Du\right\vert ^{q-2})\,u_{x_{i}}v_{x_{i}}\,dx\,.
\label{example 4}
\end{equation}%
Variations with $u$ dependence could be considered too, such as for instance 
$p=p\left( x,u\right) $, $p_{i}=p_{i}\left( x,u\right) $ and $a=a\left(
x,u\right) $. Similarly, relevant examples can be exhibit with \textit{%
nondegenerate energies} and nondegenerate differential operators when above
we interchange $\left\vert Du\right\vert $ with $(1+\left\vert Du\right\vert
^{2})^{1/2}$; thus for instance (\ref{example 2}) corresponds to 
\begin{equation}
\text{\textit{nondegenerate var-exp} forms:}\;\;\;A\left( u,v\right)
:=\int_{\Omega }\sum_{i=1}^{n}(1+\left\vert Du\right\vert ^{2})^{\frac{%
p\left( x\right) -2}{2}}u_{x_{i}}v_{x_{i}}\,dx\,.  \label{example 5}
\end{equation}%
All the previous examples enter in the so called context of $p,q-$\textit{%
conditions}, when we have $p-$\textit{ellipticity} from below and $q-$%
\textit{growth} from above (see the details in the next section). In
particular the logaritmic form in (\ref{example 1}) is elliptic with
exponent $p$ and it satisfies $q-$growth with any exponent $q>p$. The
variable exponent forms in the examples (\ref{example 2}),(\ref{example 5})
are $p-$elliptic with $p=\min \left\{ p\left( x\right) :\;x\in \overline{%
\Omega }\right\} $ and exhibit $q-$growth with $q=\max \left\{ p\left(
x\right) :\;x\in \overline{\Omega }\right\} $. Similarly, also the
anisotropic form in (\ref{example 3}) satisfies a kind of $p,q-$condition
with $p=\min_{i\in \left\{ 1,2,\ldots ,n\right\} }\left\{ p_{i}\right\} $, $%
q=\max_{i\in \left\{ 1,2,\ldots ,n\right\} }\left\{ p_{i}\right\} $. The 
\textit{double phase} form in the example (\ref{example 4}) is characterized
by the coefficient $a\left( x\right) $, which usually is assumed to be
either H\"{o}lder continuous or local Lipschitz continuous in $\Omega $. The
peculiarity is that $a\left( x\right) \geq 0$ in $\Omega $, with the
possibility to be equal to zero on a closed subset of $\Omega $. Therefore
the differential form in (\ref{example 4}) when $p<q$ behaves like a $q-$%
Laplacian in the subset of $\Omega $ where $a\left( x\right) >0$; in this
case the $p-$addendum plays the role of a \textit{``lower order term"}. While
it is a $p-$Laplacian in the subset of $\Omega $ where $a\left( x\right) =0$.

In this research we prove an existence result of weak solutions $%
W_{0}^{1,p}\left( \Omega \right) \cap W_{\mathrm{loc}}^{1,q}\left( \Omega
\right) $, to the Dirichlet problem for a second order elliptic equation in
divergence form (\ref{Dirichlet problem}), under \textit{general} and $p,q-$%
\textit{growth conditions}. Our aim is to extend, as much as possible, to
general growth the Leray-Lions Theorem \ref{Leray-Lions Theorem}. We found a
way to treat the general context with explicit dependence on $\left(
x,u\right) $, other than on the gradient variable $\xi =Du$; these aspects
require particular attention due to the $p,q-$context, with some differences
and new difficulties compared to the standard case $p=q$.

We state in Theorem \ref{Leray-Lions Theorem under p,q-growth} our existence
result, which is proved in Section \ref{Section with the proof}. We make use
of the well-known \textit{nonlinear Schauder approach}, where the principal
tools are the \textit{a-priori estimates}, the \textit{local Lipschitz
continuity} and the $W_{\mathrm{loc}}^{2,2}\left( \Omega \right) -$\textit{%
summability results}, recently obtained by the authors in \cite%
{Cupini-Marcellini-Mascolo regularity 2024} under general $p,q-$conditions
and explicit dependence on $\left( x,u\right) $. We construct a suitable $%
\varepsilon -$sequence of regular equations with standard growth for which
the Leray-Lions Theorem \ref{Leray-Lions Theorem} is valid. By applying to
the approximating solutions $u_{\varepsilon }$ the a-priori estimates, which
turn out to be independent of $\varepsilon $, in the limit we obtain a
solution to the original problem with the regularity properties (\ref%
{gradient estimate}),(\ref{second derivatives estimate}) of Theorem \ref%
{Leray-Lions Theorem under p,q-growth}.

\bigskip

\section{Existence under general $p,q-$growth\label{Section with the
statement}}

We study the Dirichlet problem (\ref{Dirichlet problem}) when the vector
field $a\left( x,u,\xi \right) $ is locally Lipschitz continuous in $\Omega
\times \mathbb{R}\times \mathbb{R}^{n}$, with $\Omega $ bounded open set in $%
\mathbb{R}^{n}$, $n\geq 2$. The right hand side $b\left( x\right) $ is a
measurable function in $\Omega $. We consider two exponents $p,q$, with $%
2\leq p\leq q<p+1$, and the \textit{ellipticity condition} 
\begin{equation}
\sum_{i,j=1}^{n}\tfrac{\partial a^{i}}{\partial \xi _{j}}\lambda _{i}\lambda
_{j}\geq m(1+\left\vert \xi \right\vert ^{2})^{\frac{p-2}{2}}\left\vert
\lambda \right\vert ^{2}\;,  \label{ellipticity}
\end{equation}%
valid for a positive constant $m$ and for every $\lambda ,\xi \in \mathbb{R}%
^{n}$, $\left( x,u\right) \in \Omega \times \mathbb{R}$. We also assume the 
\textit{growth conditions} 
\begin{equation}
\left\vert \tfrac{\partial a^{i}}{\partial \xi _{j}}\right\vert \leq
M(1+\left\vert \xi \right\vert ^{2})^{\frac{q-2}{2}}+M\left\vert
u\right\vert ^{\alpha }\,,\;\;\;\;\;\left\vert \tfrac{\partial a^{i}}{%
\partial u}\right\vert \leq M(1+\left\vert \xi \right\vert ^{2})^{\frac{p+q-4%
}{4}}+M\left\vert u\right\vert ^{\beta -1}\,,  \label{growth 1}
\end{equation}%
with $M>0$ and $0\leq \alpha \leq \frac{2\left( q-2\right) }{q-p+2}$, $0\leq
\beta <p-1$. 
Moreover, for every open set $\Omega ^{\prime }$, whose closure is contained
in $\Omega ,$ and for every $L>0$, there exist a positive constant $M\left(
L\right) $ (depending on $\Omega ^{\prime }$ and $L$) such that, for every $%
x\in \Omega ^{\prime }$, $\xi \in \mathbb{R}^{n}$ and for$\;\left\vert
u\right\vert \leq L$,\textit{\ }%
\begin{equation}
\left\vert \tfrac{\partial a^{i}}{\partial \xi _{j}}-\tfrac{\partial a^{j}}{%
\partial \xi _{i}}\right\vert \leq M\left( L\right) (1+\left\vert \xi
\right\vert ^{2})^{\frac{p+q-4}{4}}\,,\;\;\;\;\;\left\vert \tfrac{\partial
a^{i}}{\partial x_{s}}\right\vert \leq M\left( L\right) (1+\left\vert \xi
\right\vert ^{2})^{\frac{p+q-2}{4}},  \label{growth 2}
\end{equation}%
for every $\xi \in \mathbb{R}^{n}$, $i,j,s=1,2,\ldots ,n$. We also assume%
\begin{equation}
\left\{ 
\begin{array}{l}
\left\vert a\left( \cdot ,0,0\right) \right\vert \in L_{\mathrm{loc}%
}^{\gamma ^{\;}}\left( \Omega \right) \,, \\ 
b\in L_{\mathrm{loc}}^{s_{0}}\left( \Omega \right) \,,%
\end{array}%
\right.  \label{growth 3}
\end{equation}%
with $\gamma >\frac{n}{p-1}$ in (\ref{growth 3})$_{1}$ and $s_{0}>n$ in (\ref%
{growth 3})$_{2}$. We assume also a summability condition \textit{valid in
the full }$\Omega $; i.e., 
\begin{equation}
	\left\{ 
	\begin{array}{lc}
		\left\vert a\left( \cdot ,0,0\right) \right\vert \in L^{p^{\prime }\cdot
			q^{\prime }}( \Omega)\,, & p^{\prime }:=\tfrac{p}{p-1}\,, \\ 
		b\in L^{q^{\prime }}( \Omega) \,, & q^{\prime }:=\tfrac{q}{q-1}%
		\,.%
	\end{array}%
	\right.  \label{growth 4}
\end{equation}
The summability assumptions  on $b$ are satisfied if $b\in L^{s_{0}}(\Omega )$%
, $s_{0}>n$; in fact $s_{0}>n\geq 2\geq \frac{q}{q-1}$ since $q\ge p\geq 2$. The summability assumption 
(\ref{growth 4})$_1$ of course  implies that  $	 a\left( \cdot ,0,0\right) $ belongs to $ L^{p^{\prime }}(\Omega)$ and to $L^{q^{\prime }}(\Omega)$.

\medbreak
Under the $p,q-$growth conditions with $p\leq q$, the Sobolev class $W_{%
\mathrm{loc}}^{1,q}\left( \Omega \right) $ is the natural class were to look
for solutions; see \cite[Section 3.1]{Marcellini 2023} for a discussion
about this aspect. Precisely, under the above $p,q-$growth conditions, a 
\textit{weak solution} to the Dirichlet problem (\ref{Dirichlet problem}) is
a function $u\in W_{0}^{1,p}\left( \Omega \right) \cap W_{\mathrm{loc}%
}^{1,q}\left( \Omega \right) $ such that, for every open set $\Omega
^{\prime }\subset \subset \Omega $, 
\begin{equation}
\int_{\Omega }\sum_{i=1}^{n}a^{i}\left( x,u,Du\right)
v_{x_{i}}dx+\int_{\Omega }b\left( x\right) v\,dx=0,\;\;\forall \,v\in
W_{0}^{1,q}\left( \Omega ^{\prime }\right) \,.
\label{definition of weak solution}
\end{equation}

\medbreak
Our main existence  (and regularity)  result is the following.
\begin{theorem}
\label{Leray-Lions Theorem under p,q-growth} Under the ellipticity condition
(\ref{ellipticity}) and the growth conditions (\ref{growth 1})-(\ref{growth
4}), if 
\begin{equation}
\tfrac{q}{p}<1+\tfrac{1}{n}\,,\,  \label{q/p bound}
\end{equation}%
then the Dirichlet problem (\ref{Dirichlet problem}) has (at least) a weak
solution in $W_{0}^{1,p}\left( \Omega \right) \cap W_{\mathrm{loc}%
}^{1,q}\left( \Omega \right) $. Moreover this weak solution has more
interior regularity, in the sense that $u\in W_{\mathrm{loc}}^{1,\infty
}\left( \Omega \right) \cap W_{\mathrm{loc}}^{2,2}(\Omega )$ and the
following gradient and $W^{2,2}$ local estimates hold. In particular $u\in
L_{\mathrm{loc}}^{\infty }\left( \Omega \right) $ and, fixed $\Omega
^{\prime }\subset \subset \Omega $, the constant $c$ below depends on the $%
L_{\mathrm{loc}}^{\infty }\left( \Omega ^{\prime }\right) $ bound of $u$ but
not on $u$ itself: for the gradient there exists a parameters $\alpha \geq 1$
such that 
\begin{equation}
\left\Vert Du\left( x\right) \right\Vert _{L^{\infty }\left( B_{\varrho
}\right) }\leq \left( \tfrac{c}{\left( R-\varrho \right) ^{n}}%
\int_{B_{R}}(1+\left\vert Du\left( x\right) \right\vert ^{2})^{\frac{p}{2}%
}\,dx\right) ^{\frac{\alpha }{p}}  \label{gradient estimate}
\end{equation}%
\begin{equation*}
\underset{\text{for }n>2}{=}\;\left( \tfrac{c}{\left( R-\varrho \right) ^{n}}%
\left\Vert (1+\left\vert Du\left( x\right) \right\vert ^{2})^{\frac{1}{2}%
}\right\Vert _{L^{p}\left( B_{R}\right) }^{p}\right) ^{\frac{2}{\left(
n+2\right) p-nq}}\,,
\end{equation*}%
and for the $n\times n$ matrix $D^{2}u$ of the second derivatives 
\begin{equation}
\int_{B_{\rho }}\left\vert D^{2}u\right\vert ^{2}\,dx\leq \tfrac{c}{\left(
R-\rho \right) ^{2}}\int_{B_{R}}(1+\left\vert Du\left( x\right) \right\vert
^{2})^{\frac{q}{2}}\,dx\,  \label{second derivatives estimate}
\end{equation}%
\begin{equation*}
\leq \frac{c^{\prime }}{\left( R-\rho \right) ^{2}}\left( \tfrac{1}{\left(
R-\varrho \right) ^{\gamma \vartheta \left( q-p\right) }}\int_{B_{R}}(1+%
\left\vert Du\left( x\right) \right\vert ^{2})^{\frac{p}{2}}\,dx\right) ^{%
\frac{\alpha q}{\vartheta p}}\,.
\end{equation*}
\end{theorem}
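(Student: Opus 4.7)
The overall plan follows the nonlinear Schauder scheme sketched in the introduction. I would approximate the vector field $a$ by a regularized $a_{\varepsilon}$ of standard $q$-growth so that for each $\varepsilon>0$ the classical Leray-Lions Theorem \ref{Leray-Lions Theorem} produces a weak solution $u_{\varepsilon}\in W_{0}^{1,q}(\Omega)$; then transfer the a-priori estimates of \cite{Cupini-Marcellini-Mascolo regularity 2024} to $u_{\varepsilon}$ with constants independent of $\varepsilon$ and pass to the limit. A natural choice is the additive perturbation
\begin{equation*}
a_{\varepsilon}^{i}(x,u,\xi):=a^{i}(x,u,\xi)+\varepsilon(1+|\xi|^{2})^{(q-2)/2}\xi_{i},
\end{equation*}
which strengthens the ellipticity (\ref{ellipticity}), preserves monotonicity, produces a natural $q$-growth so that $|a_{\varepsilon}(\cdot,0,0)|\in L^{q'}(\Omega)$ by (\ref{growth 4}), and forces the coercivity (\ref{Leray-Lions 7}) with exponent $q$; so Theorem \ref{Leray-Lions Theorem} applied with exponent $q$ delivers $u_{\varepsilon}$.

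Next I would observe that $a_{\varepsilon}$ still satisfies the hypotheses (\ref{ellipticity})-(\ref{growth 2}) with constants uniform in $\varepsilon$: the perturbation only adds a monotone, structurally compatible term, $\partial a_{\varepsilon}^{i}/\partial u$ is unchanged, and the extra cross term $\partial a_\varepsilon^i/\partial\xi_j$ is of order at most $(1+|\xi|^2)^{(q-2)/2}$, compatible with (\ref{growth 1}). Therefore the local Lipschitz estimate (\ref{gradient estimate}) and the $W^{2,2}_{loc}$ estimate (\ref{second derivatives estimate}) of \cite{Cupini-Marcellini-Mascolo regularity 2024} hold for every $u_{\varepsilon}$ with constants not depending on $\varepsilon$, provided one first secures a uniform local $L^{\infty}$ bound on $u_{\varepsilon}$; this bound follows from the Moser-type iteration carried out in \cite{Cupini-Marcellini-Mascolo regularity 2024}, which uses the global summability (\ref{growth 4}) and the sub-critical constraint (\ref{q/p bound}). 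A standard energy estimate, obtained by testing with $u_{\varepsilon}$ and using (\ref{Leray-Lions 7}), gives a bound for $\|u_{\varepsilon}\|_{W_{0}^{1,p}(\Omega)}$ uniform in $\varepsilon$ as well.

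For the passage to the limit I would extract a subsequence $u_{\varepsilon_{k}}\rightharpoonup u$ in $W_{0}^{1,p}(\Omega)$, weakly-$*$ in $W_{loc}^{1,\infty}(\Omega)$, and weakly in $W_{loc}^{2,2}(\Omega)$; Rellich compactness for $W^{2,2}_{loc}$ yields strong $L^{2}_{loc}$ convergence of $Du_{\varepsilon_{k}}$, hence a pointwise a.e. subsequence. Combined with the uniform $L^{\infty}_{loc}$ bound on the gradients and the Carath\'eodory property of $a^{i}$, dominated convergence gives $a^{i}(x,u_{\varepsilon_{k}},Du_{\varepsilon_{k}})\to a^{i}(x,u,Du)$ in $L^{r}_{loc}(\Omega)$ for every $r<\infty$. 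The additional term $\varepsilon_{k}(1+|Du_{\varepsilon_{k}}|^{2})^{(q-2)/2}Du_{\varepsilon_{k}}$ vanishes in $L^{1}_{loc}$ thanks to the uniform $W^{1,\infty}_{loc}$ bound, so the weak formulation (\ref{definition of weak solution}) follows for every $v\in W_{0}^{1,q}(\Omega')$, $\Omega'\subset\subset\Omega$; the estimates (\ref{gradient estimate})-(\ref{second derivatives estimate}) pass to the limit by lower semicontinuity.

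The main obstacle I anticipate is the $\varepsilon$-uniform local $L^{\infty}$ bound on $u_{\varepsilon}$: without it none of the estimates of \cite{Cupini-Marcellini-Mascolo regularity 2024} would apply with constants independent of $\varepsilon$ and the scheme would collapse. The delicate interplay between the $p,q$-structure, the global summability (\ref{growth 4}), and the bound (\ref{q/p bound}) is exactly what allows this estimate to survive the approximation. A secondary delicate point is calibrating the perturbation so that it is weak enough to keep the ellipticity and growth constants from degenerating as $\varepsilon\to 0^{+}$, while being strong enough to render Theorem \ref{Leray-Lions Theorem} directly applicable to $a_{\varepsilon}$.
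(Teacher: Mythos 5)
Your overall scheme matches the paper (regularize, apply Leray--Lions, get $\varepsilon$-uniform a-priori estimates, pass to the limit), but there is a genuine gap at the very first step: the choice of exponent in the additive perturbation.

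You take $a_{\varepsilon}^{i}=a^{i}+\varepsilon(1+|\xi|^{2})^{(q-2)/2}\xi_{i}$, a perturbation of order exactly $q$, and claim this \emph{forces the coercivity} (\ref{Leray-Lions 7}) with exponent $q$. This is not true, because the lower bound for $\left(a(x,u,\xi),\xi\right)$ coming from (\ref{ellipticity}) and (\ref{growth 1}) carries $u$- and $x$-dependent negative terms: one obtains, say,
$\left(a(x,u,\xi),\xi\right)\geq c_{1}|\xi|^{p}-c_{2}|u|^{\theta}-b_{1}(x)$,
so that
$\left(a_{\varepsilon}(x,u,\xi),\xi\right)\geq \varepsilon|\xi|^{q}-c_{2}|u|^{\theta}-b_{1}(x)$,
and there is no \emph{uniform} $M$ for which the right-hand side exceeds $m|\xi|^{q}$ once $|\xi|\geq M$; hence (\ref{Leray-Lions 7}) fails for $a_{\varepsilon}$. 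The only route to Theorem \ref{Leray-Lions Theorem} is therefore to verify (\ref{Leray-Lions 6}) directly, and here the exponent $q$ is not enough. Indeed, bounding
$A_{\varepsilon}(u,u)\geq -c\int_{\Omega}\left(|Du|^{q}+|u|^{q}+b_{1}\right)\,dx+\varepsilon\int_{\Omega}|Du|^{q}\,dx$
and using Sobolev, $\int|u|^{q}\leq c_{S}^{q}|\Omega|^{\cdot}\int|Du|^{q}$, leaves a coefficient $(\varepsilon-c-cc_{S}^{q}|\Omega|^{\cdot})$ in front of $\int|Du|^{q}$, which is negative for small $\varepsilon$ and yields no coercivity. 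The paper avoids this precisely by raising the perturbation to order $q+\varepsilon$,
$a_{\varepsilon}^{i}=a^{i}+\varepsilon(1+|\xi|^{2})^{(q+\varepsilon-2)/2}\xi_{i}$,
so that the good term $\varepsilon\int|Du|^{q+\varepsilon}$ \emph{strictly} dominates every $q$-order negative contribution; H\"older gives $\int|Du|^{q}\leq |\Omega|^{\cdot}\left(\int|Du|^{q+\varepsilon}\right)^{q/(q+\varepsilon)}$ and the ratio in (\ref{Leray-Lions 6}) does go to $+\infty$. Moreover, one checks that the regularity constraint is preserved by fixing $\varepsilon_{0}$ with $(q+\varepsilon_{0})/p<1+1/n$.

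A secondary soft spot: you dismiss the global energy bound as ``a standard energy estimate, obtained by testing with $u_{\varepsilon}$ and using (\ref{Leray-Lions 7})''. Since (\ref{Leray-Lions 7}) is unavailable, the paper derives the $\varepsilon$-uniform $W_{0}^{1,p}(\Omega)$ bound (Lemma \ref{global gradient estimate}) from the strong monotonicity inequality (\ref{in the proof 4}), the growth condition (\ref{growth 1})$_{2}$ and Young/Sobolev; the $u$-dependence must be absorbed carefully (hence the restriction $\beta<p-1$ and the role of (\ref{growth 4})). This step should not be presented as routine. The rest of your outline — the two-case $L^{\infty}_{\mathrm{loc}}$ bound, the transfer of the interior Lipschitz and $W^{2,2}_{\mathrm{loc}}$ estimates from \cite{Cupini-Marcellini-Mascolo regularity 2024}, and the passage to the limit via a.e.\ gradient convergence and lower semicontinuity — is consistent with the paper's Steps 7–9.

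Keep the same scheme, but change the perturbation exponent to $q+\varepsilon$ (or to a fixed $Q\in(q,\,p(1+1/n))$), and verify (\ref{Leray-Lions 6}) directly via the $L^{q'}$-type lower bound of Lemma \ref{variant of the Linking Lemma (i)} and H\"older, rather than appealing to (\ref{Leray-Lions 7}).
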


The proof is given in the next section.

\begin{remark}
\label{Remark after the existence theorem}We already noted in the
introduction that, following the original existence theorem by Leray-Lions,
in the literature some similar ``\textit{Leray-Lions type} operators" exist;
such as, for instance, the differential operator $A\left( u\right) $ defined
in (\ref{Leray-Lions type 1}) under the further coercivity assumption (\ref%
{Leray-Lions type 2}). In Theorem \ref{Leray-Lions Theorem under p,q-growth}
we adopt and generalize the original Leray-Lions approach in \cite%
{Leray-Lions 1965}, as explicitly described in Theorem \ref{Leray-Lions
Theorem} above. Nevertheless it is of interest to find and prove existence
results for other similar \textit{Leray-Lions type} differential operators
under general growth conditions as those described in (\ref{growth 1})-(\ref%
{growth 4}), with the ellipticity as in (\ref{ellipticity}). In particular
it would be of interest to consider right hand sides $b=b\left(
x,u,Du\right) $ explicitly depending on $\left( u,Du\right) $ too; with
respect to this aspect we mention that the regularity theory that we use in
the proof of the following Step 8, i.e. the local gradient estimate (\ref%
{gradient bound in the proof}) and the local bound on the second derivatives
(\ref{bound on second derivatives in the proof}), hold for a general right
hand side $b=b\left( x,u,Du\right) $ too (see \cite%
{Cupini-Marcellini-Mascolo regularity 2024}).
\end{remark}

We emphasize that Theorem \ref{Leray-Lions Theorem under p,q-growth} proves
existence of at least one solution to the Dirichlet problem (\ref{Dirichlet
problem}) under general ellipticity and growth conditions, in the spirit of
the Leray-Lions Theorem \ref{Leray-Lions Theorem}. As well Theorem \ref%
{Leray-Lions Theorem}, the result of Theorem \ref{Leray-Lions Theorem under
p,q-growth} does not gives \textit{multiplicity} of weak solutions. However
the researches in these fields with general growth conditions, \textit{%
logaritmic}, \textit{variable exponents, anisotropic, double phase},
nowadays is so wide that it is correct to give some references of some
multiplicity results obtained for specific elliptic - sometime also
parabolic - Dirichlet problems. It is not possible to give a full landscape
of all the papers\ related to multiplicity and to existence of nontrivial
solution; however we quote at least Mih\u{a}ilescu-Pucci-R\u{a}dulescu \cite%
{Mihailescu-Pucci-Radulescu 2008}, Papageorgiou-R\u{a}dulescu-Zhang \cite%
{Papageorgiou-Radulescu-Zhang 2022}, Fang-R\u{a}dulescu-Chao Zhang-Xia Zhang 
\textsc{\cite{Fang-Radulescu-Zhang-Zhang 2022},} Liu Jingjing-Patrizia Pucci 
\cite{Liu-Pucci 2023} and the references therein.

Nowadays the literature on $p,q-$problems is large, mainly devoted to
regularity. The first approach can be found in the $90^{\prime }s$ in \cite%
{Marcellini ARMA 1989},\cite{Marcellini 1991}, not only in \textit{the }$%
p,q- $\textit{context} but also for \textit{general nonuniformly elliptic
problems}, see more recently \cite{DiMarco-Marcellini 2020},\cite{Marcellini
2020}. Recently a strong impulse, with the introduction of the terminology
of \textit{double phase integrals} and fine results, was given by Baroni,
Colombo and Mingione in \cite{Colombo-Mingione 2015},\cite%
{Baroni-Colombo-Mingione 2018}. \textit{Variable exponents} and double phase
problems are considered by Byun-Oh \cite{Byun-Oh 2020}, Ragusa-Tachikawa 
\cite{Ragusa-Tachikawa 2020}. For \textit{Orlicz-Sobolev spaces} see
Diening-Harjulehto-Hasto-Ruzicka \cite{Diening-Harjulehto-Hasto-Ruzicka 2011}%
, Chlebicka \cite{Chlebicka 2018}, Chlebicka-DeFilippis \cite%
{Chlebicka-DeFilippis 2019}, H\"{a}st\"{o}-Ok \cite{Hasto-Ok 2022}. About%
\textit{\ quasiconvex integrals} of the calculus of variations see in
particular \cite{Boegelein-Dacorogna-Duzaar-Marcellini-Scheven 2020},\cite%
{Dacorogna-Marcellini 1998},\cite{Marcellini 1984} and
DeFilippis-Stroffolini \cite{De Filippis JMPA 2022},\cite{De
Filippis-Stroffolini 2023}. Most of the quoted results deal with \textit{%
interior} regularity, such as \cite{Cupini-Marcellini-Mascolo 2017},\cite%
{Cupini-Marcellini-Mascolo 2018},\cite{Cupini-Marcellini-Mascolo-Passarelli
2023},\cite{Eleuteri-Marcellini-Mascolo-Perrotta 2022},\cite%
{Eleuteri-Passarelli 2023},\cite{Fang-Radulescu-Zhang-Zhang 2022}, apart
from Cianchi-Maz'ya \cite{Cianchi-Mazya 2011},\cite{Cianchi-Mazya 2014}, B%
\"{o}gelein-Duzaar-Marcellini-Scheven \cite%
{Boegelein-Duzaar-Marcellini-Scheven JMPA 2021}, DeFilippis-Piccinini \cite%
{Defilippis-Piccinini 2022-2023}.

\section{Proof of Theorem \protect\ref{Leray-Lions Theorem under p,q-growth} 
\label{Section with the proof}}

With the aim to obtain the existence of a weak solution to the Dirichlet
problem (\ref{Dirichlet problem}) we start by assumption (\ref{q/p bound})
and we consider a parameter $\varepsilon \in \left( 0,\varepsilon _{0}\right]
$, were $\varepsilon _{0}$ is a fixed positive real number such that 
\begin{equation}
\tfrac{q+\varepsilon _{0}}{p}<1+\tfrac{1}{n}\,.
\label{q/p bound in the proof}
\end{equation}%
For every $\varepsilon \in \left( 0,\varepsilon _{0}\right] $ we introduce
the \textit{approximated Dirichlet problem} 
\begin{equation}
\left\{ 
\begin{array}{l}
\sum_{i=1}^{n}\frac{\partial }{\partial x_{i}}\left\{ a^{i}\left( x,u\left(
x\right) ,Du\left( x\right) \right) +\varepsilon \,\left( 1+\left\vert
Du\right\vert ^{2}\right) ^{\frac{q+\varepsilon -2}{2}}u_{x_{i}}\right\}
=b\left( x\right) ,\;\;x\in \Omega \,, \\ 
u=0\;\;\;\text{on }\partial \Omega \,.%
\end{array}%
\right.  \label{approximated Dirichlet problem}
\end{equation}

We start describing the scheme of the proof of Theorem \ref{Leray-Lions
Theorem under p,q-growth}. The first part consists in proving that, for
every $\varepsilon $, there exists a weak solution $u_{\varepsilon }\in
W_{0}^{1,q+\varepsilon }\left( \Omega \right) $ to the approximated
Dirichlet problem (\ref{approximated Dirichlet problem}); at this stage the
Leray-Lions Theorem \ref{Leray-Lions Theorem} pays a role, see Steps 1-4
below. Two types of Sobolev-estimates - \textit{local} and \textit{global}
ones - have a central role in the proof of Theorem \ref{Leray-Lions Theorem
under p,q-growth}: \textit{(i)} the \textit{interior} $L_{\mathrm{loc}%
}^{\infty }\left( \Omega \right) -$gradient estimates for $Du$ and the $L_{%
\mathrm{loc}}^{2}\left( \Omega \right) -$estimate for the $n\times n$ matrix 
$D^{2}u$ of the second derivatives, recently obtained by the authors in \cite%
{Cupini-Marcellini-Mascolo regularity 2024}; \textit{(ii)} the \textit{global%
} $L^{p}\left( \Omega \right) -$gradient estimate proved in the following
Lemma \ref{global gradient estimate}. These interior and global regularity
properties will be uniform with respect to $\varepsilon \in \left(
0,\varepsilon _{0}\right] $ and will allow us to go to the limit as $%
\varepsilon $ goes to zero, obtaining the desired solution to Dirichlet
problem (\ref{Dirichlet problem}).

\begin{lemma}[Global $L^{p}\left( \Omega \right) -$gradient estimate]
\label{global gradient estimate} There exists a constant $c$, depending on $%
n,p,\beta $ and the $n$-Lebesgue measure of $\Omega $, but not on $%
\varepsilon $, such that 
\begin{equation}
\Vert Du_{\varepsilon }\Vert _{L^{p}(\Omega )}^{p}\leq \,c\Big(1+\Vert
a(\cdot ,0,0)\Vert _{L^{\frac{p}{p-1}}(\Omega )}+\Vert b\Vert _{L^{\frac{%
p^{\ast }}{p^{\ast }-1}}(\Omega )}\Big)^{\frac{p}{p-1}}.
\label{e:Lpuniforme}
\end{equation}
\end{lemma}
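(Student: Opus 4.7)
The plan is to test the weak formulation of the approximated problem (\ref{approximated Dirichlet problem}) with $u_\varepsilon$ itself, which is admissible since $u_\varepsilon \in W_{0}^{1,q+\varepsilon}(\Omega) \subset W_{0}^{1,p}(\Omega)$. The resulting identity reads
\[
\int_\Omega \sum_i a^i(x,u_\varepsilon,Du_\varepsilon)(u_\varepsilon)_{x_i}\,dx + \varepsilon \int_\Omega (1+|Du_\varepsilon|^2)^{\frac{q+\varepsilon-2}{2}}|Du_\varepsilon|^2\,dx = -\int_\Omega b\,u_\varepsilon\,dx.
\]
The $\varepsilon$-integral on the left is nonnegative and will simply be discarded; this is the step that makes the resulting bound uniform in $\varepsilon$.

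To extract a coercive $\|Du_\varepsilon\|_{L^{p}}^{p}$ contribution from the $a$-integral, I would integrate the ellipticity (\ref{ellipticity}) along the segment from $0$ to $\xi$ in the gradient variable, obtaining
\[
\sum_i a^i(x,u,\xi)\,\xi_i \;\ge\; \tfrac{m}{p-1}|\xi|^p + \sum_i a^i(x,u,0)\,\xi_i,
\]
where $\int_{0}^{1}(1+t^2|\xi|^2)^{(p-2)/2}|\xi|^2\,dt \ge |\xi|^p/(p-1)$ uses $p\ge 2$. The residual term $a(x,u,0)$ is then controlled by integrating the $|\partial_u a^i|$ bound from (\ref{growth 1}) along $[0,u]$ at $\xi=0$:
\[
|a(x,u,0)| \;\le\; |a(x,0,0)| + M|u| + \tfrac{M}{\beta}|u|^{\beta}.
\]
Since $\beta<p-1$, one has $\beta p' < p \le p^{*}$, so the Sobolev--Poincaré inequality on $W_{0}^{1,p}(\Omega)$ controls both $\|u_\varepsilon\|_{L^{p'}}$ and $\|u_\varepsilon\|_{L^{\beta p'}}^{\beta}$ by $\|Du_\varepsilon\|_{L^{p}}$ (up to the $|\Omega|$-dependent constants). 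Combined with Hölder this gives
\[
\Big|\int_\Omega \sum_i a^i(x,u_\varepsilon,0)(u_\varepsilon)_{x_i}\,dx\Big| \;\le\; \big(\|a(\cdot,0,0)\|_{L^{p'}} + C + C\|Du_\varepsilon\|_{L^{p}}^{\beta}\big)\|Du_\varepsilon\|_{L^{p}} + CM\|Du_\varepsilon\|_{L^{p}}^{2}.
\]
For the right-hand side, Sobolev embedding together with Hölder yields the linear bound $|\int_\Omega b\,u_\varepsilon\,dx| \le C\|b\|_{L^{(p^{*})'}}\|Du_\varepsilon\|_{L^{p}}$.

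Combining everything and applying Young's inequality---with conjugates $(p,p')$ on the terms linear in $\|Du_\varepsilon\|_{L^{p}}$, and with the subcritical exponents $(p/(\beta+1), p/(p-\beta-1))$ on the $\|Du_\varepsilon\|_{L^{p}}^{\beta+1}$ term (permissible since $\beta+1<p$)---all gradient powers on the right are dominated by a small multiple of $\|Du_\varepsilon\|_{L^{p}}^{p}$ plus constants depending only on $\|a(\cdot,0,0)\|_{L^{p'}}$ and $\|b\|_{L^{(p^{*})'}}$. Absorbing these into the coercive $\tfrac{m}{p-1}\|Du_\varepsilon\|_{L^{p}}^{p}$ on the left and raising to the exponent $p'$ then yields exactly (\ref{e:Lpuniforme}).

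The main obstacle is the handling of the linear-in-$u$ contribution $M|u|$ which, via Hölder--Poincaré, produces a $\|Du_\varepsilon\|_{L^{p}}^{2}$ term: for $p>2$ this is strictly subcritical with respect to $\|Du_\varepsilon\|_{L^{p}}^{p}$ and absorbed immediately by Young, whereas in the borderline $p=2$ case absorption still works but the constant $c$ in (\ref{e:Lpuniforme}) must be allowed to depend implicitly on $m,M$ (beyond the explicit dependence on $n,p,\beta,|\Omega|$). The minor technical point of interpreting $M|u|^{\beta-1}$ when $\beta\in[0,1)$ is handled by using the regularized version $M(1+|u|)^{\beta-1}$, which gives the same integrated bound and leaves the argument unchanged.
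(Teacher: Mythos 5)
Your outline reproduces the paper's Step 6: test the weak formulation with $v=u_\varepsilon$, discard the nonnegative $\varepsilon$-term, obtain the pointwise coercivity $|\xi|^{p}\lesssim (a(x,u,\xi),\xi)-(a(x,u,0),\xi)$ from the ellipticity (the paper derives the same inequality from Marcellini's 1991 monotonicity estimate with $\eta=0$; integrating (\ref{ellipticity}) along $[0,\xi]$ as you do is the identical computation), control $(a(x,u_\varepsilon,0),Du_\varepsilon)$ by integrating (\ref{growth 1})$_2$ at $\xi=0$ from $0$ to $u_\varepsilon$, and close by H\"older, Sobolev--Poincar\'e and Young. In this respect the argument coincides with the one in the paper.

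The $p=2$ borderline case, however, is dismissed too quickly. The term $M|u_\varepsilon|\,|Du_\varepsilon|$ produces, after H\"older and Poincar\'e, a contribution of order $M\,C_P\,\|Du_\varepsilon\|_{L^{p}}^{2}$ with $C_P$ a fixed domain-dependent constant, and when $p=2$ this scales \emph{exactly} like the coercive term $\tfrac{m}{p-1}\|Du_\varepsilon\|_{L^{p}}^{p}$. Absorbing it therefore requires a genuine smallness condition of the type $M\,C_P<m$; this cannot be traded for merely letting the output constant $c$ in (\ref{e:Lpuniforme}) depend on $m$ and $M$, since the issue is whether the coefficient on the left stays positive, not how large $c$ is. The same tension is present in the paper's Step 6: the Young inequality applied to $\int_\Omega|u_\varepsilon|^{p/(p-1)}\,dx$ uses conjugate exponents $p-1$ and $\tfrac{p-1}{p-2}$, which degenerate at $p=2$ and leave no free parameter $\delta$ there. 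As written, both your argument and the paper's close the absorption only for $p>2$; the case $p=2$ needs either an explicit smallness hypothesis (on $M$, $m$, $|\Omega|$) or a different treatment of the linear-in-$u$ residual coming from $\partial_u a^i$.
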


See the proof of Lemma \ref{global gradient estimate} in the Step 6 below.
Recall that, as usual, $p^{\ast }:=\frac{np}{n-p}$ if $p<n$, otherwise $%
p^{\ast }$ is any real number greater than $p$; since we can choose its
value, we fix $p^{\ast }>q$ also when $p\geq n$ (and we observe that, if $%
p<n $, then by (\ref{q/p bound}) automatically $p^{\ast }>q$). Notice that
by (\ref{growth 4}) the right hand side in (\ref{e:Lpuniforme}) is finite
and bounded independently of $\varepsilon $. In fact  $p^*>q $ implies  $L^{q'}(\Omega)\subseteq 
L^{(p^*)'}(\Omega)$, therefore the $L^{(p^*)'}-$norm of $b$ is finite due to   (\ref{growth 4})$_2$.  Moreover the $L^{p'}-$ norm of $a(\cdot,0,0)$ is finite because of (\ref{growth 4})$_1$, since $q'>1$.


\medbreak
In the proof of Theorem \ref{Leray-Lions Theorem under p,q-growth} we use
the following Lemma too, which is a variant of the author's Linking Lemma
3.1-\textit{(i)} in \cite{Cupini-Marcellini-Mascolo regularity 2024}.

\begin{lemma}
\label{variant of the Linking Lemma (i)}Under the ellipticity condition (\ref%
{ellipticity}) and the order-one growth condition (\ref{growth 1})$_{2}$
with $0\leq \beta \leq p-1$, the following bound from below holds 
\begin{equation}
\left( a\left( x,u,\xi \right) ,\xi \right) \geq -c\left( \left\vert \xi
\right\vert ^{q}+\left\vert u\right\vert ^{q}+\left\vert a\left(
x,0,0\right) \right\vert ^{\frac{q}{q-1}}+1\right) \,,
\label{(i) in the Lemma}
\end{equation}%
for a positive constant $c$ and for all $x\in \Omega $, $u\in \mathbb{R}$
and $\xi \in \mathbb{R}^{n}$.
\end{lemma}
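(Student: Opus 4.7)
The strategy is to split
$$\big(a(x,u,\xi),\xi\big) = \big(a(x,u,\xi)-a(x,u,0),\xi\big) + \big(a(x,u,0),\xi\big)$$
and exploit ellipticity to discard the first addend. Writing $a(x,u,\xi)-a(x,u,0) = \int_0^1 D_\xi a(x,u,t\xi)\,\xi\,dt$ and contracting against $\xi$, assumption (\ref{ellipticity}) gives
$$\big(a(x,u,\xi)-a(x,u,0),\xi\big) = \int_0^1 \sum_{i,j=1}^{n} \tfrac{\partial a^i}{\partial \xi_j}(x,u,t\xi)\,\xi_i\xi_j\,dt \geq m\int_0^1 (1+t^2|\xi|^2)^{\frac{p-2}{2}} |\xi|^2\,dt \geq 0,$$
since $p \geq 2$. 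Consequently $\big(a(x,u,\xi),\xi\big) \geq -|a(x,u,0)|\,|\xi|$, and the task reduces to controlling $|a(x,u,0)|$ by $|u|$ and $|a(x,0,0)|$.

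The next step is to integrate $\partial_u a$ along a segment in the $u$-variable connecting $0$ to $u$, namely $a(x,u,0) - a(x,0,0) = \int_0^1 \partial_u a(x,tu,0)\,u\,dt$. Evaluating the growth condition (\ref{growth 1})$_2$ at $\xi=0$ produces $|\partial_u a^i(x,v,0)| \leq M + M|v|^{\beta-1}$, and integration yields
$$|a(x,u,0)| \leq |a(x,0,0)| + C\big(|u| + |u|^{\beta}\big).$$
Here one must be mindful of the formal singularity of $|v|^{\beta-1}$ at $v=0$ when $\beta<1$; the standard way to handle this, as in the Linking Lemma of \cite{Cupini-Marcellini-Mascolo regularity 2024}, is either to read the bound as $(1+|v|)^{\beta-1}$ or to integrate on $[\varepsilon,1]$ and let $\varepsilon\to 0$, and in either case the same $|u|^{\beta}$ estimate emerges.

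Combining the two steps and applying Young's inequality with conjugate exponents $q$ and $q'=q/(q-1)$ to the product $|a(x,u,0)|\,|\xi|$, we obtain
$$\big(a(x,u,\xi),\xi\big) \geq -c\big(|a(x,0,0)|^{q'} + |u|^{q'} + |u|^{\beta q'} + |\xi|^q\big).$$
Since $q\geq 2$, we have $q' = q/(q-1) \leq q$, hence $|u|^{q'} \leq 1 + |u|^q$; and since $\beta \leq p-1 \leq q-1$, we have $\beta q' = \beta q/(q-1) \leq q$, so $|u|^{\beta q'} \leq 1 + |u|^q$ as well. These absorbing remarks deliver the announced lower bound, with a constant $c$ depending only on $p,q,\beta$ and on the structural constants $m,M$. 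The only genuinely delicate point is the treatment of the singularity at $u=0$ in the second step; everything else is algebra and standard Young-type absorption.
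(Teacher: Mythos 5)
Your proof is correct, and it follows a genuinely different decomposition than the paper's. You split the path from $(0,0)$ to $(u,\xi)$ into two orthogonal legs — first $(u,0)\to(u,\xi)$ in the gradient variable, then $(0,0)\to(u,0)$ in the $u$-variable at $\xi=0$ — and exploit the ellipticity on the first leg purely through its \emph{sign}, discarding the whole contribution since $(1+|t\xi|^2)^{(p-2)/2}\geq 0$; the lower bound then reduces to controlling $|a(x,u,0)|$, which the second leg handles via the growth of $\partial_u a$ at $\xi=0$. The paper instead integrates jointly along the diagonal $t\mapsto(tu,t\xi)$, which forces it to cope simultaneously with the favorable ellipticity term and the unfavorable cross term $(1+|t\xi|^2)^{(p+q-4)/4}|\xi||u|$, absorbing the latter into the former by splitting the exponent $(p+q-4)/4=\tfrac{p-2}{4}+\tfrac{q-2}{4}$ and applying Young. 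Your route is shorter and cleaner for this particular one-sided bound precisely because you do not try to retain any positivity from the ellipticity — you simply throw it away, which is all a lower bound of the form $-c(\cdots)$ asks for; the paper's joint-path argument is the same machinery it reuses elsewhere (notably for the coercivity (\ref{from the Linking Lemma 3.1-(i)}), where the positive $|\xi|^p$ term \emph{is} needed), so its choice is dictated by uniformity of technique rather than economy here. Two minor remarks: (i) your absorption step is correct, since $q'\leq q$ for $q\geq 2$ and $\beta q'\leq q$ is exactly $\beta\leq q-1$, which follows from $\beta\leq p-1\leq q-1$; (ii) the singularity of $|v|^{\beta-1}$ at $v=0$ when $\beta<1$, which you flag, is indeed present and is tacitly handled in the paper too (their passage to $|\xi||u|^{\beta}$ inside the $t$-integral presumes $t^{\beta-1}$ integrable, i.e.\ $\beta>0$, or else the bound must be read as $(1+|v|)^{\beta-1}$), so your caveat matches the paper's implicit convention rather than pointing to a defect unique to your argument.
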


\begin{proof}
With the component notation $\xi =\left( \xi _{i}\right) _{i=1,\ldots ,n}$,
we have 
\begin{equation*}
\left( a\left( x,u,\xi \right) ,\xi \right) -\left( a\left( x,0,0\right)
,\xi \right)
\end{equation*}%
\begin{equation*}
=\sum_{i=1}^{n}\left\{ a^{i}\left( x,u,\xi \right) -a^{i}\left( x,0,0\right)
\right\} \xi _{i}=\int_{0}^{1}\frac{d}{dt}\sum_{i=1}^{n}a^{i}\left(
x,tu,t\xi \right) \xi _{i}\,dt
\end{equation*}%
\begin{equation*}
=\int_{0}^{1}\left\{ \sum_{i=1}^{n}a_{u}^{i}\left( x,tu,t\xi \right) \xi
_{i}u+\sum_{i,j=1}^{n}a_{\xi _{j}}^{i}\left( x,tu,t\xi \right) \xi _{i}\xi
_{j}\right\} \,dt\,.
\end{equation*}%
We use the growth condition (\ref{growth 1})$_{2}$ and the ellipticity
assumption (\ref{ellipticity}); we get 
\begin{equation*}
\left( a\left( x,u,\xi \right) ,\xi \right) -\left( a\left( x,0,0\right)
,\xi \right)
\end{equation*}%
\begin{equation*}
\geq \int_{0}^{1}\left\{ -nM\left( (1+\left\vert t\xi \right\vert ^{2})^{%
\frac{p+q-4}{4}}\left\vert \xi \right\vert \left\vert u\right\vert
+\left\vert \xi \right\vert \left\vert u\right\vert ^{\beta }\right)
+m\left( 1+\left\vert t\xi \right\vert ^{2}\right) ^{\frac{p-2}{2}%
}\left\vert \xi \right\vert ^{2}\right\} dt
\end{equation*}%
\begin{equation*}
\ge \int_{0}^{1}  -nM\left( (1+\left\vert t\xi \right\vert ^{2})^{\frac{%
p-2}{4}}\left\vert \xi \right\vert (1+\left\vert t\xi \right\vert ^{2})^{%
\frac{q-2}{4}}\left\vert u\right\vert +\left\vert \xi \right\vert \left\vert
u\right\vert ^{\beta }\right)  \,dt 
\end{equation*}%
\begin{equation*}
\geq \int_{0}^{1}  -nM\left( (1+\left\vert t\xi \right\vert ^{2})^{%
\frac{p-2}{2}}\left\vert \xi \right\vert ^{2}+\tfrac{1}{4}(1+\left\vert t\xi
\right\vert ^{2})^{\frac{q-2}{2}}\left\vert u\right\vert ^{2}+\left\vert \xi
\right\vert \left\vert u\right\vert ^{\beta }\right)  \,dt
\end{equation*}%
We take $t=1$ in the integrands and we apply Young's inequality with
conjugate exponents $\frac{q}{q-2}$ and $\frac{q}{2}$, while in the last 
addendum we consider the conjugate exponents $p$ and $\frac{p}{p-1}$%
\begin{equation*}
\left( a\left( x,u,\xi \right) ,\xi \right) -\left( a\left( x,0,0\right)
,\xi \right) \geq -c
(1+|\xi|^2)^{\frac{p-2}{2}}|\xi|^2 -c (1+\left\vert \xi \right\vert ^{2})^{\frac{%
q}{2}}
\end{equation*}%
\begin{equation*}
-c \left\vert u\right\vert ^{q}-c\left\vert \xi \right\vert ^{p}-c\left\vert u\right\vert ^{\beta \frac{p}{p-1}}.
\end{equation*}%
By Young's inequality with conjugate exponents $\frac{q}{q-1}$ and $q$ 
\begin{equation}
	\left\vert \left( a\left( x,0,0\right) ,\xi \right) \right\vert \leq
	\left\vert a\left( x,0,0\right) \right\vert \left\vert \xi \right\vert \leq 
	\tfrac{q-1}{q}\left\vert a\left( x,0,0\right) \right\vert ^{\frac{q}{q-1}}+%
	\tfrac{1}{q}\left\vert \xi \right\vert ^{q}\,.  \label{consequences 0-a}
\end{equation}%
We arrive to a bound from below of the type 
\begin{equation*}
\left( a\left( x,u,\xi \right) ,\xi \right) \geq - c\left(  
(1+|\xi|^2)^{\frac{p}{2}}+ 
(1+|\xi|^2)^{\frac{q}{2}}
+\left\vert u\right\vert ^{\beta \frac{p}{p-1}}  +\left\vert u\right\vert
^{q}\right) -\left\vert a\left( x,0,0\right) \right\vert ^{\frac{q}{q-1}}\,;
\end{equation*}%
this bound from below implies the conclusion (\ref{(i) in the Lemma}), since 
$\beta \frac{p}{p-1}\leq p\leq q$,  $\left\vert u\right\vert ^{\beta \frac{%
p}{p-1}}\leq 1+\left\vert u\right\vert ^{q}$ 
and 
$(1+|\xi|^2)^{\frac{p}{2}}\le 
(1+|\xi|^2)^{\frac{q}{2}}\le c(1+|\xi|^q)$.
\end{proof}

\bigskip

Let us first verify that the assumptions in the Leray-Lions Theorem \ref%
{Leray-Lions Theorem} are satisfied with respect to the $\left(
q+\varepsilon \right) -$exponent and to the differential operator associated
to the vector field $a_{\varepsilon }\left( x,u,\xi \right) =\left(
a_{\varepsilon }^{i}\left( x,u,\xi \right) \right) _{i=1,\ldots ,n}$, $%
\varepsilon \in \left( 0,\varepsilon _{0}\right] $, given by 
\begin{equation}
a_{\varepsilon }^{i}\left( x,u,\xi \right) :=a^{i}\left( x,u,\xi \right)
+\varepsilon \,(1+\left\vert \xi \right\vert ^{2})^{\frac{q+\varepsilon -2}{2%
}}\xi _{i}\,.  \label{definition of a-epsilon}
\end{equation}%
The right hand side $b\left( x\right) $ in (\ref{approximated Dirichlet
problem}) satisfies the needed summability condition. In fact the assumption 
$b\in L_{\mathrm{loc}}^{s_{0}}\left( \Omega \right) $ in (\ref{growth 3})$%
_{2}$ with $s_{0}>n$ and the fact that $q\geq 2$ implies that $2\geq \frac{q%
}{q-1}$ and so $s_{0}>n\geq 2\geq \frac{q}{q-1}=q^{\prime }$; therefore $%
b\in L_{\mathrm{loc}}^{q^{\prime }}\left( \Omega \right) $ too.

\medbreak\textit{Step 1:} The condition (\ref{Leray-Lions 1}), in the
context considered here, takes the form 
\begin{equation}
\left\vert a_{\varepsilon }\left( x,u,\xi \right) \right\vert \leq M\left(
\left\vert \xi \right\vert ^{q+\varepsilon -1}+\left\vert u\right\vert
^{q +\varepsilon -1}+b_{1}\left( x\right) \right)  \label{e:crescitaaepsGC}
\end{equation}%
with 
\begin{equation}
b_{1}\left( x\right) :=  1+\left\vert a\left( x,0,0\right)
\right\vert^{\frac{p}{p-1}}.  \label{consequences 0-b}
\end{equation}
%

The growth condition (\ref{e:crescitaaepsGC}) it is a consequence of the
Linking Lemma 3.1-\textit{(ii)} in \cite{Cupini-Marcellini-Mascolo
	regularity 2024}: by (\ref{growth 1}) there exists a constant $c$ such that 
\begin{equation*}
	\left\vert a\left( x,u,\xi \right) \right\vert \leq
	c\, \left(\left\vert \xi \right\vert ^{q -1}+ \left\vert
	u\right\vert ^{2\frac{q  -1}{q  -p+2}}+b_{1}\left(
	x\right)\right)
\end{equation*}
and thus for every  $\varepsilon \leq \varepsilon _{0}$, 
\begin{equation*}
	\left\vert a_{\varepsilon }\left( x,u,\xi \right) \right\vert \leq \varepsilon
	(1+\left\vert \xi \right\vert ^{2})^{\frac{q+\varepsilon -2}{2%
	}}|\xi|
+	c\, \left(\left\vert \xi \right\vert ^{q -1}+ \left\vert
u\right\vert ^{2\frac{q  -1}{q  -p+2}}+b_{1}\left(
x\right)\right).
\end{equation*}
Since $q\geq p$, then  $2\frac{q  -1}{%
	q  -p+2}\leq  q-1$ and we
obtain $\left\vert a_{\varepsilon }\left( x,u,\xi \right) \right\vert \leq
M(\left\vert \xi \right\vert ^{q+\varepsilon -1}+\left\vert u\right\vert
^{q  -1}+b_{1}\left( x\right)+1 )
\leq
M(\left\vert \xi \right\vert ^{q+\varepsilon -1}+\left\vert u\right\vert
^{q +\varepsilon -1}+b_{1}\left( x\right)+2 )
$. Since $b_1\ge 1$ this inequality implies 
\eqref{e:crescitaaepsGC}, possibly with a greater constant $M$.

, that implies

as required in (\ref%
{Leray-Lions 1}) with $p$ replaced by $q+\varepsilon $.

\medbreak\textit{Step 2:} We need to prove (\ref{Leray-Lions 2}) in the form 
\begin{equation}
\lim_{\left\vert \xi \right\vert \rightarrow +\infty }\frac{\left(
a_{\varepsilon }\left( x,u,\xi \right) ,\xi \right) }{\left\vert \xi
\right\vert +\left\vert \xi \right\vert ^{q+\varepsilon -1}}=+\infty \,
\label{Leray-Lions 2 in the proof}
\end{equation}%
for a.e. $x\in \Omega $ and $\left\vert u\right\vert $ bounded. By the
Linking Lemma 3.1-\textit{(i)} in \cite{Cupini-Marcellini-Mascolo regularity
2024}, as consequence of the assumptions (\ref{growth 1}), we obtain 
\begin{equation}
\left( a\left( x,u,\xi \right) ,\xi \right) \geq c_{1}\left\vert \xi
\right\vert ^{p}-c_{2}\left\vert u\right\vert ^{\theta }-b_{1}\left(
x\right) \,,  \label{from the Linking Lemma 3.1-(i)}
\end{equation}%
with $\theta :=\max \left\{ \frac{2p}{p-q +2};\beta \frac{p}{p-1}\right\} $
and $b_{1}\left( x\right) $ as before. Notice that, by (\ref{q/p bound})
then, in particular, $\frac{q }{p}<1+\frac{2}{n}$ which is equivalent to $%
\frac{2p}{p-q +2}<p^{\ast }$; moreover, as in (\ref{growth 1}), $0\leq \beta
<p-1$ and thus $\beta \frac{p}{p-1}<p$; therefore $\theta <p^{\ast }$. The $%
\varepsilon -$term in (\ref{definition of a-epsilon}) gives the contribution 
\begin{equation}
\left( \varepsilon \,(1+\left\vert \xi \right\vert ^{2})^{\frac{%
q+\varepsilon -2}{2}}\xi ,\xi \right) =\varepsilon \,(1+\left\vert \xi
\right\vert ^{2})^{\frac{q+\varepsilon -2}{2}}\left\vert \xi \right\vert
^{2}\geq \varepsilon \,\left\vert \xi \right\vert ^{q+\varepsilon }\,.
\label{in the proof 3}
\end{equation}%
Therefore, since $x$ is fixed in $\Omega $ and $\left\vert u\right\vert $ is
bounded, for every fixed positive $\varepsilon $ the limit condition in (\ref%
{Leray-Lions 2 in the proof}) follows by (\ref{from the Linking Lemma
3.1-(i)}),(\ref{in the proof 3}).

\medskip

\textit{Step 3:} Reasoning as in \cite[Lemma 4.4]{Marcellini 1991}, we can
prove that 
\begin{equation}
\left( a\left( x,u,\xi \right) -a\left( x,u,\eta \right) ,\xi -\eta \right)
\geq m\left( 1+\left\vert \tfrac{\xi +\eta }{2}\right\vert ^{2}\right) ^{%
\frac{p-2}{2}}|\xi -\eta |^{2}\,,  \label{in the proof 4}
\end{equation}%
for every $\xi ,\eta \in \mathbb{R}^{n}$. Indeed, 
\begin{equation*}
\left( a\left( x,u,\xi \right) -a\left( x,u,\eta \right) ,\xi -\eta \right)
=\int_{0}^{1}\sum_{i=1}^{n}\frac{d}{dt}a^{i}\left( x,u,\eta +t(\xi -\eta
)\right) \{\xi _{i}-\eta _{i}\}\,dt
\end{equation*}%
\begin{equation*}
=\int_{0}^{1}\sum_{i,j=1}^{n}a_{\xi _{j}}^{i}\left( x,tu,\eta +t(\xi -\eta
)\right) \{\xi _{i}-\eta _{i}\}\{\xi _{j}-\eta _{j}\}\,dt\,.
\end{equation*}%
By the ellipticity assumption (\ref{ellipticity}), for every $\xi ,\eta \in 
\mathbb{R}^{n}$ we have 
\begin{equation*}
\left( a\left( x,u,\xi \right) -a\left( x,u,\eta \right) ,\xi -\eta \right)
\geq m\int_{0}^{1}(1+\left\vert \eta +t(\xi -\eta )\right\vert ^{2})^{\frac{%
p-2}{2}}\,dt\left\vert \xi -\eta \right\vert ^{2}.
\end{equation*}%
Therefore (\ref{in the proof 4}) holds, because, as shown in \cite%
{Marcellini 1991}, 
\begin{equation*}
\int_{0}^{1}(1+\left\vert \eta +t(\xi -\eta )\right\vert ^{2})^{\frac{p-2}{2}%
}\,dt\geq \left( 1+\left\vert \tfrac{\xi +\eta }{2}\right\vert ^{2}\right) ^{%
\frac{p-2}{2}}.
\end{equation*}%
Since 
\begin{equation*}
\sum_{i=1}^{n}(1+\left\vert \xi \right\vert ^{2})^{\frac{q+\varepsilon -2}{2}%
}(\xi _{i}-\eta _{i})^{2}>0\,,\quad \forall\; \xi ,\eta \in \mathbb{R}^{n}:\;\xi \neq \eta \,,
\end{equation*}%
we obtain 
\begin{equation}
\left( a_{\varepsilon }\left( x,u,\xi \right) -a_{\varepsilon }\left(
x,u,\eta \right) ,\xi -\eta \right) >0,\;\;\;\;\;\forall \;\xi ,\eta \in 
\mathbb{R}^{n}:\;\xi \neq \eta \,  \label{in the proof 5}
\end{equation}%
and condition (\ref{Leray-Lions 3}) in the Leray-Lions Theorem \ref%
{Leray-Lions Theorem} also holds.

\medbreak

\textit{Step 4:} We plan to verify (\ref{Leray-Lions 6}) in our context;
precisely 
\begin{equation}
\lim_{\left\Vert u\right\Vert _{W_{0}^{1,q+\varepsilon }\left( \Omega
\right) }\rightarrow +\infty }\frac{\left\vert A_{\varepsilon }\left(
u,u\right) \right\vert }{\left( \int_{\Omega }\left\vert Du\right\vert
^{q+\varepsilon }\,dx\right) ^{1/\left( q+\varepsilon \right) }}=+\infty \,,
\label{in the proof 6}
\end{equation}%
where $A_{\varepsilon }\left( u,v\right) $ is the form, linear in $v\in
W_{0}^{1, q+\varepsilon }\left( \Omega \right) $, 
\begin{equation*}
A_{\varepsilon }\left( u,v\right) :=\int_{\Omega }\sum_{i=1}^{n}a^{i}\left(
x,u,Du\right) v_{x_{i}}\,dx+\varepsilon \int_{\Omega }\sum_{i=1}^{n}\left(
1+\left\vert Du\right\vert ^{2}\right) ^{\frac{q+\varepsilon -2}{2}%
}u_{x_{i}}v_{x_{i}}\,dx\,.
\end{equation*}%
To prove (\ref{in the proof 6}) we start by Lemma \ref{variant of the
Linking Lemma (i)}. By (\ref{(i) in the Lemma}) and (\ref{in the proof 3})
we obtain 
\begin{equation}
A_{\varepsilon }\left( u,u\right) \geq -c\int_{\Omega }\left( \left\vert
Du\right\vert ^{q}+\left\vert u\right\vert ^{q}+b_{1}\left( x\right) \right)
\,dx+\varepsilon \int_{\Omega }|Du|^{q+\varepsilon }\,dx\,.
\label{in the proof 7}
\end{equation}%
As well known, by H\"{o}lder inequality and by Sobolev embedding theorem we
have 
\begin{equation*}
\left( \int_{\Omega }|u|^{q}\,dx\right) ^{\frac{1}{q}}\leq \left(
\int_{\Omega }\left\vert u\right\vert ^{q^{\ast }}\,dx\right) ^{\frac{1}{%
q^{\ast }}}\left\vert \Omega \right\vert ^{1-\frac{1}{q^{\ast }}}
\end{equation*}%
\begin{equation}
\leq c_{S}\left( \int_{\Omega }\left\vert Du\right\vert ^{q}\,dx\right) ^{%
\frac{1}{q}}\left\vert \Omega \right\vert ^{1-\frac{1}{q^{\ast }}},
\label{in the proof 8}
\end{equation}%
where $c_{S}$ is the Sobolev constant. Moreover, again by H\"{o}lder
inequality, 
\begin{equation}
\left( \int_{\Omega }|Du|^{q}\,dx\right) ^{\frac{1}{q}}\leq \left(
\int_{\Omega }|Du|^{q+\varepsilon }\,dx\right) ^{\frac{1}{q+\varepsilon }%
}\left\vert \Omega \right\vert ^{1-\frac{1}{q+\varepsilon }}\,.
\label{in the proof 9}
\end{equation}%
From (\ref{in the proof 7}),(\ref{in the proof 8}),(\ref{in the proof 9}) we
get 
\begin{equation*}
A_{\varepsilon }\left( u,u\right) \geq -\left( c+c_{S}^q\left\vert \Omega
\right\vert ^{q-\frac{q}{q^{\ast }}}\right) \int_{\Omega }\left\vert
Du\right\vert ^{q}\,dx-c\int_{\Omega }b_{1}\left( x\right) \,dx+\varepsilon
\int_{\Omega }|Du|^{q+\varepsilon }\,dx
\end{equation*}%
\begin{equation*}
\geq -\left( c+c_{S}^q\left\vert \Omega
\right\vert ^{q-\frac{q}{q^{\ast }}}\right) \left\vert \Omega \right\vert ^{q-\frac{q}{q+\varepsilon }}\left(
\int_{\Omega }|Du|^{q+\varepsilon }\,dx\right) ^{\frac{q}{q+\varepsilon }}
\end{equation*}%
\begin{equation*}
-c\int_{\Omega }b_{1}\left( x\right) \,dx+\varepsilon \int_{\Omega
}|Du|^{q+\varepsilon }\,dx\,.
\end{equation*}%
Recall that, at this step, $\varepsilon $ is a positive fixed parameter.
Since $\frac{q}{q+\varepsilon }<1$, by this estimate we see that the limit
as $\left\Vert u\right\Vert _{W_{0}^{1,q+\varepsilon }\left( \Omega \right)
}^{q+\varepsilon }=\int_{\Omega }\left\vert Du\right\vert ^{q+\varepsilon
}\,dx\rightarrow +\infty $ of the form $A_{\varepsilon }\left( u,u\right) $
is equal to $+\infty $, as well as the limit in (\ref{in the proof 6}).

\medbreak

\textit{Step 5:} In this step we discuss the summability properties of the
function $b$ in the Dirichlet problem (\ref{Dirichlet problem}) and of the
function $b_1$ in (\ref{e:crescitaaepsGC}). We are assuming, see (\ref%
{growth 4})$_2$, that $b\in L^{q^{\prime}}(\Omega)$; since $q<q+\varepsilon$%
, then $(q+\varepsilon)^{\prime}\le q^{\prime}$. This implies, in
particular, that $b\in L^{(q+\varepsilon)^{\prime}}(\Omega)$. Analogously,
by (\ref{growth 4})$_1$ we have the global summability assumption $b_1\in
L^{q^{\prime}}\left( \Omega \right)$. Also in this case, we get $b_1\in
L^{(q+\varepsilon)^{\prime}}\left( \Omega \right)$.


\begin{remark}
Since (\ref{e:crescitaaepsGC}), (\ref{Leray-Lions 2 in the proof}) and (\ref%
{in the proof 6}) hold true, and since $b, b_1\in
L^{(q+\varepsilon)^{\prime}}\left( \Omega \right)$, then all the assumptions
of Theorem \ref{Leray-Lions Theorem} hold. Therefore, for every $\varepsilon 
$, there exists a weak solution $u_{\varepsilon }\in W_{0}^{1,q+\varepsilon
}\left( \Omega \right) $ to the approximated Dirichlet problem (\ref%
{approximated Dirichlet problem}), i.e. 
\begin{align}
\int_{\Omega }\sum_{i=1}^{n}\Big\{ a^{i}\left( x, u_\varepsilon\left(
x\right) ,D u_\varepsilon\left( x\right) \right)& +\varepsilon \,\left(
1+\left\vert D u_\varepsilon\right\vert^{2}\right) ^{\frac{q+\varepsilon -2}{%
2}} (u_{\varepsilon })_{x_{i}}\Big\} v_{x_{i}}\,dx  \notag \\
& \qquad +\int_{\Omega }b\left( x\right) v\,dx=0\,  \label{e:wsol}
\end{align}%
for every $v\in W_{0}^{1,q+\varepsilon }(\Omega )$.
\end{remark}

\medbreak

\textit{Step 6:} In this step we prove Lemma \ref{global gradient estimate}.

By the assumption $p\geq 2$, the inequality (\ref{in the proof 4}) and
reasoning as in \cite[Lemma 4.4, eq. (4.12)]{Marcellini 1991}, there exists $%
\tilde{c}>0$ such that, for every $\xi,\eta\in \mathbb{R}^n$ and for every $%
u\in \mathbb{R}$, 
\begin{equation*}
|\xi |^{p}\leq \tilde{c}\left( |\eta |^{p}+\sum_{i=1}^{n}\left(
a^{i}(x,u,\xi )-a^{i}(x,u,\eta )\right) (\xi _{i}-\eta _{i})\right);
\end{equation*}
in particular, for $\eta=0$, 
\begin{equation}
|\xi |^{p}\leq \tilde{c}\left\{\left( a\left( x,u,\xi \right) ,\xi \right)-
\left( a\left( x,u,0 \right) ,\xi \right)\right\}.  \label{e:Marcellini1991}
\end{equation}%
We have 
\begin{align}
-\left( a\left( x,u,0 \right) ,\xi \right) =& -\left( a\left( x,0,0\right)
,\xi\right) -\int_{0}^{1}\sum_{i=1}^{n}\frac{d}{dt}a^{i}\left(
x,tu,0\right)\xi _{i}\,dt  \notag \\
=& -\left( a\left( x,0,0\right) ,\xi \right)
-\int_{0}^{1}\sum_{i=1}^{n}a_{u}^{i}\left( x,tu,0\right) \xi _{i} u \,dt\,.
\label{consequences 1}
\end{align}
Using (\ref{growth 1})$_2$ 
\begin{equation*}
\left\vert \int_{0}^{1}\sum_{i=1}^{n}a_{u}^{i}\left( x,tu,0\right) \xi _{i}
u\,dt\right\vert \leq \sqrt{n} M\left( |u|+\left\vert u\right\vert ^{\beta
}\right) |\xi |.
\end{equation*}
By Young's inequality, for every $\tau >0$ 
\begin{equation*}
\left( |u|+\left\vert u\right\vert ^{\beta }\right) |\xi | \newline
\leq \frac{p-1}{p\,\tau ^{\frac{p}{p-1}}}\left( |u|^{\frac{p}{p-1}%
}+\left\vert u\right\vert ^{\beta \frac{p}{p-1}}\right) +\frac{2\tau ^{p}}{p}%
|\xi |^{p}.
\end{equation*}%
Analogously, for every $\tau >0$ 
\begin{equation*}
-\left( a\left( x,0,0\right) ,\xi \right) \leq \frac{p-1}{p\,\tau ^{\frac{p}{%
p-1}}}|a(x,0,0)|^{\frac{p}{p-1}}+\frac{\tau^{p}}{p}|\xi |^{p}.
\end{equation*}
By \eqref{consequences 1} and the estimates above, 
by choosing $\tau $ small enough we get 
\begin{equation*}
-\tilde{c}\left( a\left( x,u,0 \right) ,\xi \right) \leq \ \frac{1 }{2}|\xi
|^{p}+c\left\{|u|^{\frac{p}{p-1}} +\left\vert u\right\vert ^{\beta \frac{p}{%
p-1}}+\left\vert a\left( x,0,0\right) \right\vert ^{\frac{p}{p-1}} \right\} ,
\end{equation*}
where $\tilde{c}$ is the constant at the right hand-side of (\ref%
{e:Marcellini1991}) and $c_1$ Therefore, by (\ref{e:Marcellini1991}) we get 
\begin{equation*}
|\xi |^{p}\leq c\left\{\left( a\left( x,u,\xi \right) ,\xi \right) + |u|^{%
\frac{p}{p-1}}+\left\vert u\right\vert^{\beta \frac{p}{p-1}}+\left\vert
a\left( x,0,0\right) \right\vert ^{\frac{p}{p-1}} \right\}.
\end{equation*}
In particular, if we consider $\xi =Du_{\varepsilon }(x)$ and $%
u=u_{\varepsilon }(x)$ we get 
\begin{align}  \label{e:coercivityGC}
|Du_{\varepsilon }(x)|^{p}\leq & \,c\sum_{i=1}^{n}a^{i}(x,u_{\varepsilon
}(x),Du_{\varepsilon }(x))(u_{\varepsilon })_{x_{i}}(x) \\
& +c\left\{ |u_{\varepsilon }(x)|^{\frac{p}{p-1}}+|u_{\varepsilon
}(x)|^{\beta \frac{p}{p-1}}+\left\vert a\left( x,0,0\right) \right\vert ^{%
\frac{p}{p-1}}\right\}.  \notag
\end{align}%
By (\ref{e:wsol}), used with $v=u_{\varepsilon }$, we get 
\begin{align*}
\int_{\Omega }\sum_{i=1}^{n}a^{i}\left( x,u,Du_{\varepsilon }\right)
(u_{\varepsilon })_{x_{i}}\,dx=& -\varepsilon \int_{\Omega }\,\left(
1+\left\vert Du_{\varepsilon }\right\vert ^{2}\right) ^{\frac{q+\varepsilon}{%
2}}\,dx-\int_{\Omega }b(x)u_{\varepsilon }\,dx\,,  \notag \\
\le & -\int_{\Omega }b(x)u_{\varepsilon }\,dx,
\end{align*}
therefore, using (\ref{e:coercivityGC}) 
\begin{align}  \label{e:Duepsilonp1}
\int_{\Omega }|Du_{\varepsilon}(x)|^p\,dx\le &-c\int_{\Omega
}b(x)u_{\varepsilon} \\
& +c\int_{\Omega }\left\{ |u_{\varepsilon}(x)|^{\frac{p}{p-1}}+
|u_{\varepsilon}(x)|^{\beta\frac{p}{p-1}}+\left| a\left(
x,0,0\right)\right|^{\frac{p}{p-1}} \right\} \,dx.  \notag
\end{align}
By H\"older inequality with exponent $\frac{p^*(p-1)}{p\beta}$, that is
greater than $1$ because by assumption $\beta <p-1$, and by Sobolev
embedding theorem, we get 
\begin{align*}
\int_{\Omega} |u_{\varepsilon}|^{\beta\frac{p}{p-1}}\,dx \leq & c \left(
\int_{\Omega}\left\vert u \right\vert ^{p^*} \, dx\right)^{\frac{\beta p}{%
p^*(p-1)}}|\Omega|^{1-\frac{\beta p}{ p^*(p-1)}} \\
\le & c_S \left(\int_{\Omega}\left\vert Du\right\vert^{p} \, dx\right)^{%
\frac{\beta }{p-1}}|\Omega|^{1-\frac{\beta p}{ p^*(p-1)}}.
\end{align*}
By Young's inequality with exponent $\frac{p-1}{\beta}$ we get that for
every $\delta>0$ 
\begin{equation*}
\int_{\Omega} |u_{\varepsilon}|^{\beta\frac{p}{p-1}}\,dx \le \frac{%
\beta\delta^{\frac{p-1}{\beta}}}{p-1} \int_{\Omega}\left\vert
Du_{\varepsilon}\right\vert^{p} \, dx + \frac{p-1-\beta}{(p-1)\delta^{\frac{%
p-1}{p-1-\beta}}}c_{S}^{\frac{p-1}{p-1-\beta}} |\Omega|^{\big(\frac{p-1}{%
\beta}\big)^{\prime}\big(1-\frac{\beta p}{p^*(p-1)}\big)}.
\end{equation*}
The same computations as above, with $\beta=1$, give 
\begin{equation*}
\int_{\Omega} |u_{\varepsilon}|^{\frac{p}{p-1}}\,dx \le \frac{\delta^{p-1}}{%
p-1} \int_{\Omega}\left\vert Du_{\varepsilon}\right\vert^{p} \, dx + \frac{%
p-2}{(p-1)\delta^{\frac{p-1}{p-2}}}c_{S}^{\frac{ p-1}{p-2}}
|\Omega|^{(p-1)^{\prime}\big(1-\frac{p}{p^*(p-1)}\big)}.
\end{equation*}
To estimate the first integral at the right hand side of (\ref{e:Duepsilonp1}%
) we remark that inequality (\ref{q/p bound}) implies, if $p<n$, that $q<p^*$%
; this last condition is trivially satisfied, if $p\ge n$, by choosing $p^*$
greater than $q$. Since by (\ref{growth 4})$_2$ it is $b\in
L^{q^{\prime}}(\Omega)$ we get $b\in L^{(p^*)^{\prime}}(\Omega)$. Hence the
H\"older inequality with exponent $p^*$ gives 
\begin{align*}
-\int_{\Omega }b(x)u_{\varepsilon} \,dx \le& \left(\int_{\Omega}
|u_{\varepsilon}|^{p^*}\,dx\right)^{\frac{1}{p^*}}\left(\int_{\Omega }|b|^{%
\frac{p^*}{p^*-1}} \,dx\right)^{1-\frac{1}{p^*}} \\
\le &\, c_S \left(\int_{\Omega}\left\vert Du_{\varepsilon}\right\vert^{p} \,
dx\right)^{\frac{1}{p}} \|b\|_{L^{\frac{p^*}{p^*-1}}(\Omega)};
\end{align*}
therefore, by Young's inequality with exponent $p$ we get that for every $%
\delta>0$ 
\begin{equation*}
-\int_{\Omega }b(x)u_{\varepsilon} \,dx \le c_S\frac{\delta^{p}}{p}
\int_{\Omega}\left\vert Du_{\varepsilon}\right\vert^{p} \, dx+c_S\frac{p-1}{%
p\delta^{\frac{p}{p-1}}} \|b\|_{L^{\frac{p^*}{p^*-1}}(\Omega)}^{\frac{p}{p-1}%
}.
\end{equation*}
Taking into account these estimates, starting from (\ref{e:Duepsilonp1}), by
choosing $\delta$ small enough, we get 
\begin{align*}
\|Du_{\varepsilon}\|_{L^p(\Omega)}^p \le\, & c \|a(\cdot,0,0)\|^{\frac{p}{p-1%
}}_{L^{\frac{p}{p-1}}(\Omega)} +c \|b\|_{L^{\frac{p^*}{p^*-1}}(\Omega)}^{%
\frac{p}{p-1}} \\
&+c|\Omega|^{\big(\frac{p-1}{\beta}\big)^{\prime}\big(1-\frac{\beta p}{%
p^*(p-1)}\big)}+c|\Omega|^{(p-1)^{\prime}\big(1-\frac{p}{p^*(p-1)}\big)} 
\notag
\end{align*}
and (\ref{e:Lpuniforme}) holds true.

\medbreak

\textit{Step 7:} $L^{\infty}_{\mathrm{loc}}$-boundedness uniformly w.r.t. $%
\varepsilon$. We distinguish two cases: $p>n$ and $p\le n$.

By the previous Step 6, the solution $u_{\varepsilon }$ of the approximating
problem is bounded in $W_{0}^{1,p}(\Omega )$ uniformly w.r.t. $\varepsilon
\in \left( 0,\varepsilon _{0}\right] $. If $p>n$ the Sobolev-Morrey
embedding Theorem also implies an $L^{\infty }(\Omega )$-bound of $%
u_{\varepsilon }$ uniform w.r.t. $\varepsilon $. If $p\leq n$ we can apply
the local boundedness result \cite[Theorem 4.2]{Cupini-Marcellini-Mascolo
regularity 2024} to get a $L_{\mathrm{loc}}^{\infty }(\Omega )$-bound of $%
u_{\varepsilon }$ uniform w.r.t. $\varepsilon $.

Therefore, in any case, we can apply the assumptions in (\ref{growth 2})
with $M(L)$ independent of $\varepsilon$.

\medbreak

\textit{Step 8:} Regularity. For the solution $u_{\varepsilon }$ to the
elliptic differential equation in (\ref{approximated Dirichlet problem}) we
use the interior $L_{\mathrm{loc}}^{\infty }\left( \Omega \right) -$gradient
estimates for $Du$ and the $L_{\mathrm{loc}}^{2}\left( \Omega \right) -$%
estimate for the $n\times n$ matrix $D^{2}u$ of the second derivatives, as
in \cite[Theorem 2.1]{Cupini-Marcellini-Mascolo regularity 2024}. Precisely,
under the ellipticity condition (\ref{ellipticity}) and the growth
conditions (\ref{growth 1}),(\ref{growth 2}),(\ref{growth 3}), with $\tfrac{%
q+\varepsilon _{0}}{p}<1+\tfrac{1}{n}$, then $u_{\varepsilon }\in L_{\mathrm{%
loc}}^{\infty }\left( \Omega \right) $ and, for every $\Omega ^{\prime
}\subset \subset \Omega $, the norm $\left\Vert u_{\varepsilon }\right\Vert
_{L^{\infty }\left( \Omega ^{\prime }\right) }$ is bounded uniformly with
respect to $\varepsilon \in \left( 0,\varepsilon _{0}\right] $. Moreover,
there exists a constant $c$ (which we continue to denote with the same
symbol), depending on the $L_{\mathrm{loc}}^{\infty }\left( \Omega ^{\prime
}\right) $ bound of $u_{\varepsilon }$ but not on $\varepsilon \in \left(
0,\varepsilon _{0}\right] $, and there exist a parameters $\alpha \geq 1$
such that 
\begin{equation}
\left\Vert Du_{\varepsilon }\left( x\right) \right\Vert _{L^{\infty }\left(
B_{\varrho }\right) }\leq \left( \tfrac{c}{\left( R-\varrho \right) ^{n}}%
\int_{B_{R}}(1+\left\vert Du_{\varepsilon }\left( x\right) \right\vert
^{2})^{\frac{p}{2}}\,dx\right) ^{\frac{\alpha }{p}}
\label{gradient bound in the proof}
\end{equation}%
\begin{equation*}
\underset{\text{for }n>2}{=}\;\left( \tfrac{c}{\left( R-\varrho \right) ^{n}}%
\left\Vert (1+\left\vert Du_{\varepsilon }\left( x\right) \right\vert ^{2})^{%
\frac{1}{2}}\right\Vert _{L^{p}\left( B_{R}\right) }^{p}\right) ^{\frac{2}{%
\left( n+2\right) p-n\left( q+\varepsilon \right) }}\,;
\end{equation*}%
and also, for the $n\times n$ matrix $D^{2}u$ of the second derivatives 
\begin{equation}
\int_{B_{\rho }}\left\vert D^{2}u_{\varepsilon }(x)\right\vert ^{2}\,dx\leq 
\tfrac{c}{\left( R-\rho \right) ^{2}}\int_{B_{R}}(1+\left\vert
Du_{\varepsilon }\left( x\right) \right\vert ^{2})^{\frac{q+\varepsilon }{2}%
}\,dx\,  \label{bound on second derivatives in the proof}
\end{equation}%
\begin{equation*}
\leq \frac{c^{\prime }}{\left( R-\rho \right) ^{2}}\left( \tfrac{1}{\left(
R-\varrho \right) ^{\gamma \vartheta \left( q+\varepsilon -p\right) }}%
\int_{B_{R}}(1+\left\vert Du_{\varepsilon }\left( x\right) \right\vert
^{2})^{\frac{p}{2}}\,dx\right) ^{\frac{\alpha \left( q+\varepsilon \right) }{%
\vartheta p}}\,.
\end{equation*}

\medbreak

\textit{Step 9:} Passage to the limit as $\varepsilon \rightarrow 0^{+}$. We
are in the conditions to use compactness. In fact, by combining (\ref%
{gradient bound in the proof}),(\ref{bound on second derivatives in the
proof}) with the global $L^{p}\left( \Omega \right) -$gradient bound of
Lemma \ref{global gradient estimate}, we obtain that, for every $\Omega
^{\prime }\subset \subset \Omega $ there exists a constant $c=c\left( \Omega
^{\prime }\right) $, depending on $\Omega ^{\prime }$ and on the $L_{\mathrm{%
loc}}^{\infty }\left( \Omega ^{\prime }\right) $ bound of $u_{\varepsilon }$%
, which is uniform with respect to $\varepsilon \in \left( 0,\varepsilon _{0}%
\right] $, such that 
\begin{equation}
\left\{ 
\begin{array}{l}
\left\Vert Du_{\varepsilon }\left( x\right) \right\Vert _{L^{p}\left( \Omega
\right) }\leq c\,, \\ 
\left\Vert u_{\varepsilon }\left( x\right) \right\Vert _{L^{\infty }\left(
\Omega ^{\prime }\right) }\leq c\left( \Omega ^{\prime }\right) \,, \\ 
\left\Vert Du_{\varepsilon }\left( x\right) \right\Vert _{L^{\infty }\left(
\Omega ^{\prime }\right) }\leq c\left( \Omega ^{\prime }\right) \,, \\ 
\left\Vert D^{2}u_{\varepsilon }(x)\right\Vert _{L^{2}\left( \Omega ^{\prime
}\right) }\leq c\left( \Omega ^{\prime }\right) \,.%
\end{array}%
\right.   \label{bounds for compacteness}
\end{equation}%
Therefore we can extract a sequence $u_{\varepsilon _{k}}$ with the
properties that, as $k\rightarrow +\infty $ the numerical sequence $%
\varepsilon _{k}\rightarrow 0^{+}$ and $u_{\varepsilon _{k}}$ converges to a
Sobolev function $u:\Omega \rightarrow \mathbb{R}$ in the following ways
(the symbol \textquotedblleft $\rightharpoonup $" as usual means \textit{%
weak convergence}; while $\rightharpoonup ^{\ast }$ means \textit{weak}$%
^{\ast }$\textit{\ convergence})%
\begin{equation}
\left\{ 
\begin{array}{l}
u_{\varepsilon _{k}}\overset{W_{0}^{1,\,p}\left( \Omega \right) }{%
\rightharpoonup }u\;, \\ 
u_{\varepsilon _{k}}\overset{W_{\mathrm{loc}}^{1,\infty }\left( \Omega
\right) }{\rightharpoonup ^{\ast }}u\;, \\ 
u_{\varepsilon _{k}}\overset{W_{\mathrm{loc}}^{2,2}\left( \Omega \right) }{%
\rightharpoonup }u\;, \\ 
u_{\varepsilon _{k}}\overset{W_{\mathrm{loc}}^{1,2}\left( \Omega \right) }{%
\rightarrow }u\;.%
\end{array}%
\right.   \label{convergence by compacteness}
\end{equation}%
In words, as $k\rightarrow +\infty $ the sequence $u_{\varepsilon _{k}}$
converges to the Sobolev function $u:\Omega \rightarrow \mathbb{R}$ in the
weak topology of $W_{0}^{1,\,p}\left( \Omega \right) $ and in the weak$%
^{\ast }$ topology of $W_{\mathrm{loc}}^{1,\infty }\left( \Omega \right) $;
moreover also in the weak topology of $W_{\mathrm{loc}}^{2,2}\left( \Omega
\right) $ and, by compactness, in the strong topology of $W_{\mathrm{loc}%
}^{1,2}\left( \Omega \right) $. All these conditions imply that 
\begin{equation}
u\in W_{0}^{1,\,p}\left( \Omega \right) \cap W_{\mathrm{loc}}^{1,\infty
}\left( \Omega \right) \cap W_{\mathrm{loc}}^{2,2}\left( \Omega \right) \,.
\label{functional spaces for u}
\end{equation}%
The bound in $W_{\mathrm{loc}}^{2,2}\left( \Omega \right) $, uniform with
respect to $\varepsilon \in \left( 0,\varepsilon _{0}\right] $, also implies
the convergence of the gradient $Du_{\varepsilon _{k}}\left( x\right) $ to
the gradient $Du\left( x\right) $ a.e. in $\Omega $. Therefore we can go to
the limit as $k\rightarrow +\infty $ in the integral form of the equation in
(\ref{approximated Dirichlet problem})$_{1}$ when $\varepsilon $ is replaced
by $\varepsilon _{k}$, i.e. 
\begin{equation*}
\int_{\Omega }\sum_{i=1}^{n}\left\{ a^{i}\left( x,u_{\varepsilon
_{k}},Du_{\varepsilon _{k}}\right) +\varepsilon _{k}\,\left( 1+\left\vert
Du_{\varepsilon _{k}}\right\vert ^{2}\right) ^{\frac{q+\varepsilon _{k}-2}{2}%
}\left( u_{\varepsilon _{k}}\right) _{x_{i}}\right\}
v_{x_{i}}\,dx\,+\int_{\Omega }bv\,dx=0\,,
\end{equation*}%
respectively, for every $k\in \mathbb{N}$, valid for all $v\in
W_{0}^{1,q+\varepsilon _{k}}(\Omega )$. To uniform the class of test
functions with respect to $k\in \mathbb{N}$, we first consider a generic
open set $\Omega ^{\prime }\subset \subset \Omega $ and a generic test
function $v\in C_{0}^{1}\left( \Omega ^{\prime }\right) $. Since $%
\varepsilon _{k}\rightarrow 0$, we obtain that $u$ satisfies the condition 
\begin{equation}
\int_{\Omega }\sum_{i=1}^{n}a^{i}\left( x,u,Du\right)
v_{x_{i}}\,dx\,+\int_{\Omega }bv\,dx=0\,,\;\;\forall \,v\in C_{0}^{1}\left(
\Omega ^{\prime }\right) \,.  \label{smooth test function}
\end{equation}%
By regularization, more generally (\ref{smooth test function}) holds for
every $v\in W_{0}^{1,q}\left( \Omega ^{\prime }\right) $ too. Therefore $%
u\in W_{0}^{1,\,p}\left( \Omega \right) \cap W_{\mathrm{loc}}^{1,\infty
}\left( \Omega \right) $ is a weak solution to the differential equation, as
defined in (\ref{definition of weak solution}) and it is also a solution to
the original Dirichlet problem (\ref{Dirichlet problem}).

Moreover we can go to the limit as $k\rightarrow +\infty $ in the right hand
side of (\ref{gradient bound in the proof}) and in the left hand side too by
lower semicontinuity. Similarly we can go to the limit as $k\rightarrow
+\infty $ also in the estimate (\ref{bound on second derivatives in the
proof}). We obtain for $u$ the validity of the estimates (\ref{gradient
estimate}),(\ref{second derivatives estimate}). The proof of Theorem \ref%
{Leray-Lions Theorem under p,q-growth} is complete.

\bigskip

\textbf{Acknowledgement} The authors are members of the \textit{Gruppo
Nazionale per l'Analisi Matematica, la Probabilit\`{a} e le loro
Applicazioni (GNAMPA)} of the \textit{Istituto Nazionale di Alta Matematica
(INdAM)}.

\bigskip

\end{document}